\newcommand{\bydef}{:=}
\newcommand{\defby}{=:}
\DeclareMathOperator{\CD}{\mathfrak{CD}}
\DeclareMathOperator{\Cl}{\mathfrak{Cl}}
\newcommand{\bi}{\mathbf{i}}
\newcommand{\sym}{\mathcal{H}}
\newcommand{\sg}{\mathrm{Sym}}
\newcommand{\cA}{\mathcal{A}}
\newcommand{\calA}{\mathcal{A}}
\newcommand{\cB}{{\mathcal B}}
\newcommand{\cC}{\mathcal{C}}
\newcommand{\cD}{{\mathcal D}}
\newcommand{\cQ}{\mathcal{Q}}
\newcommand{\calQ}{\mathcal{Q}}
\newcommand{\cK}{\mathcal{K}}
\newcommand{\calC}{\mathcal{C}}
\newcommand{\cM}{{\mathcal M}}
\newcommand{\cR}{\mathcal{R}}
\newcommand{\calK}{{\mathcal K}}
\newcommand{\W}{W} 
\newcommand{\id}{{\mathrm{id}}} 
\newcommand{\frg}{{\mathfrak g}}
\newcommand{\ul}[1]{\underline{#1}}
\newcommand{\wh}[1]{\widehat{#1}}
\newcommand{\wt}[1]{\widetilde{#1}}
\newcommand{\vphi}{\varphi}
\newcommand{\veps}{\varepsilon}
\newcommand{\diag}{\mathrm{diag}}
\DeclareMathOperator*{\ot}{\otimes}
\newcommand{\ZZ}{\mathbb{Z}}
\newcommand{\bZ}{{\mathbb Z}}
\newcommand{\bR}{{\mathbb R}}
\newcommand{\FF}{\mathbb{F}}
\newcommand{\bF}{{\mathbb F}}
\newcommand{\chr}[1]{\mathrm{char}\,#1}
\DeclareMathOperator{\Hom}{\mathrm{Hom}}
\DeclareMathOperator{\End}{\mathrm{End}}
\DeclareMathOperator{\Aut}{\mathrm{Aut}}
\DeclareMathOperator{\Stab}{\mathrm{Stab}}
\DeclareMathOperator{\Diag}{\mathrm{Diag}}
\DeclareMathOperator{\Der}{\mathrm{Der}}
\DeclareMathOperator{\supp}{\mathrm{Supp}\,}
\DeclareMathOperator{\Supp}{\mathrm{Supp}\,}
\newcommand{\Ad}{\mathrm{Ad}}
\newcommand{\Spin}{\mathrm{Spin}}
\newcommand{\M}{\Gamma_\cM}
\DeclareMathOperator{\degree}{deg}
\newcommand{\subo}{_{\bar 0}}
\newcommand{\subuno}{_{\bar 1}}
\newtheorem{theorem}{Theorem}[section]
\newtheorem{proposition}[theorem]{Proposition}
\newtheorem{corollary}[theorem]{Corollary}
\theoremstyle{definition}
\newtheorem{df}[theorem]{Definition}
\theoremstyle{remark}
\newtheorem{remark}[theorem]{Remark}
\def\hregleta{\hrule height .5pt}
\def\hreglon{\hrule height1pt}
\def\vreglon{\vrule height 12pt width1pt depth 4pt}
\def\vregleta{\vrule width .5pt}
\def\hreglonfill{\leaders\hreglon\hfill}
\def\hregletafill{\leaders\hregleta\hfill}
\begin{document}

\title[Weyl groups of fine gradings]{Weyl groups of fine gradings on matrix algebras, octonions and the Albert algebra}

\author[Alberto Elduque]{Alberto Elduque$^{\star}$}
\thanks{$^{\star}$ Supported by the Spanish Ministerio de Educaci\'{o}n y Ciencia and
FEDER (MTM 2007-67884-C04-02) and by the Diputaci\'on General de Arag\'on (Grupo de Investigaci\'on de \'Algebra)}
\address{Departamento de Matem\'aticas e Instituto Universitario de Matem\'aticas y Aplicaciones,
Universidad de Zaragoza, 50009 Zaragoza, Spain}
\email{elduque@unizar.es}

\author[Mikhail Kochetov]{Mikhail Kochetov$^{\star\star}$}
\thanks{$^{\star\star}$Supported by the Natural Sciences and Engineering Research Council (NSERC) of Canada, Discovery Grant \# 341792-07.}
\address{Department of Mathematics and Statistics,
Memorial University of Newfoundland, St. John's, NL, A1C5S7, Canada}
\email{mikhail@mun.ca}


\subjclass[2000]{Primary 16W50, secondary 17D05, 17C40.}

\keywords{Graded algebra, fine grading, Weyl group, octonions, Albert algebra}

\begin{abstract}
Given a grading $\Gamma:\cA=\bigoplus_{g\in G}\cA_g$ on a nonassociative algebra $\cA$ by an abelian group $G$, we have two subgroups of $\Aut(\cA)$: the automorphisms that stabilize each component $\cA_g$ (as a subspace) and the automorphisms that permute the components. By the Weyl group of $\Gamma$ we mean the quotient of the latter subgroup by the former. In the case of a Cartan decomposition of a semisimple complex Lie algebra, this is the automorphism group of the root system, i.e., the so-called extended Weyl group. A grading is called fine if it cannot be refined. We compute the Weyl groups of all fine gradings on matrix algebras, octonions and the Albert algebra over an algebraically closed field (of characteristic different from $2$ in the case of the Albert algebra).
\end{abstract}

\maketitle


\section{Introduction}

Let $\cA$ be an algebra (not necessarily associative) over a field $\FF$ and let $G$ be a group.
We will usually use multiplicative notation for $G$, but switch to additive notation when working with groups such as $\ZZ$ and $\ZZ_m\bydef\ZZ/m\ZZ$.

\begin{df}\label{df:G_graded_alg}
A {\em $G$-grading} on $\cA$ is a vector space decomposition
\[
\Gamma:\;\cA=\bigoplus_{g\in G} \cA_g
\]
such that
\[
\cA_g \cA_h\subset \cA_{gh}\quad\mbox{for all}\quad g,h\in G.
\]
If such a decomposition is fixed, we will refer to $\cA$ as a {\em $G$-graded algebra}.
The nonzero elements $a\in\cA_g$ are said to be {\em homogeneous of degree $g$}; we will write $\deg a=g$.
The {\em support} of $\Gamma$ is the set $\supp\Gamma\bydef\{g\in G\;|\;\cA_g\neq 0\}$.
\end{df}

There are two natural ways to define equivalence relation on group graded algebras. We will use the term ``isomorphism'' for the case when the grading group is a part of definition  and ``equivalence'' for the case when the grading group plays a secondary role. Let
\[
\Gamma:\; \cA=\bigoplus_{g\in G} \cA_g\mbox{ and }\Gamma':\;\cB=\bigoplus_{h\in H} \cB_h
\]
be two gradings on algebras, with supports $S$ and $T$, respectively.

\begin{df}\label{df:equ_grad}
We say that $\Gamma$ and $\Gamma'$ are {\em equivalent} if there exists an isomorphism of algebras $\vphi\colon\cA\to\cB$ and a bijection $\alpha\colon S\to T$ such that $\varphi(\cA_s)=\cB_{\alpha(s)}$ for all $s\in S$. Any such $\vphi$ will be called an {\em equivalence} of $\Gamma$ and $\Gamma'$. If the gradings on $\cA$ and $\cB$ are clear from the context, then we will say that $\cA$ and $\cB$ are {\em equivalent}.
\end{df}

The algebras graded by a fixed group $G$ form a category where the morphisms are the {\em homomorphisms of $G$-graded algebras}, i.e., homomorphisms of algebras $\varphi\colon\cA\to\cB$ such that $\vphi(\cA_g)\subset\cB_g$ for all $g\in G$.

\begin{df}\label{df:iso_grad}
In the case $G=H$, we say that $\Gamma$ and $\Gamma'$ are {\em isomorphic} if $\cA$ and $\cB$ are isomorphic as $G$-graded algebras, i.e., there exists an isomorphism of algebras $\varphi\colon\cA\to\cB$ such that $\varphi(\cA_g)=\cB_{g}$ for all $g\in G$.
\end{df}

Replacing $G$ with a smaller group, we can assume that $\supp\Gamma$ generates $G$. Even with this assumption, there will be, in general, many other groups $H$ such that the vector space decomposition $\Gamma$ can be realized as an $H$-grading. It turns out \cite{PZ} that there is one distinguished group among them.

\begin{df}\label{df:univ_group}
Suppose that $\Gamma$ admits a realization as a $G_0$-grading for some group $G_0$. We will say that $G_0$ is a {\em universal group of $\Gamma$} if for any other realization of $\Gamma$ as a $G$-grading, there exists a unique homomorphism $G_0\to G$ that restricts to identity on $\supp\Gamma$. We define the {\em universal abelian group} in the same manner.
\end{df}

Note that, by definition, $G_0$ is a group with a distinguished generating set, $\supp\Gamma$.
A standard argument shows that, if a universal (abelian) group exists, it is unique up to an isomorphism over $\supp\Gamma$.
We will denote it by $U(\Gamma)$. It turns out that $U(\Gamma)$ exists and depends only on the equivalence
class of $\Gamma$. Details may be found in \cite{Ksur}.

Following \cite{PZ}, we can associate three subgroups of $\Aut(\cA)$ to a grading $\Gamma$ on an algebra $\cA$.

\begin{df}\label{df:aut_diag}
The {\em automorphism group of $\Gamma$}, denoted $\Aut(\Gamma)$, consists of all automorphisms of $\cA$ that permute the components of $\Gamma$. Each $\vphi\in\Aut(\Gamma)$ determines a self-bijection $\alpha=\alpha(\varphi)$ of the support $S$ such that $\varphi(\cA_s)=\cA_{\alpha(s)}$ for all $s\in S$.
The {\em stabilizer of $\Gamma$}, denoted $\Stab(\Gamma)$, is the kernel of the homomorphism $\Aut(\Gamma)\to\sg(S)$ given by $\vphi\mapsto\alpha(\vphi)$. Finally, the {\em diagonal group of $\Gamma$}, denoted $\Diag(\Gamma)$, is the subgroup of the stabilizer consisting of all automorphisms $\varphi$ such that the restriction of $\varphi$ to any homogeneous component of $\Gamma$ is the multiplication by a (nonzero) scalar.
\end{df}

Thus $\Aut(\Gamma)$ is the group of self-equivalences of the graded algebra $\cA$ and $\Stab(\Gamma)$ is the group of automorphisms of the graded algebra $\cA$.  $\Diag(\Gamma)$ is isomorphic to the group of characters of $U(\Gamma)$. If $\dim\cA<\infty$, then $\Diag(\Gamma)$ is a diagonalizable algebraic group (quasitorus). If, in addition, $\FF$ is an algebraically closed field of characteristic $0$ and $G$ is abelian, then $\Gamma$ is the eigenspace decomposition of $\cA$ relative to $\Diag(\Gamma)$ --- see e.g. \cite{Ksur}, the group $\Stab(\Gamma)$ is the centralizer of $\Diag(\Gamma)$, and $\Aut(\Gamma)$ is its normalizer.

\begin{df}
The quotient group $\Aut(\Gamma)/\Stab(\Gamma)$, which is a subgroup of $\sg(S)$, will be called the {\em Weyl group of $\Gamma$} and denoted by $\W(\Gamma)$.
\end{df}

We use the term ``Weyl group'', because if $\Gamma$ is the Cartan grading on a semisimple complex Lie algebra $\frg$, then $\W(\Gamma)$ is isomorphic to the so-called extended Weyl group of $\frg$, i.e., the automorphism group of the root system of $\frg$.

It follows from the universal property of $U(\Gamma)$ that any $\varphi\in\Aut(\Gamma)$ gives rise to a unique automorphism $u(\varphi)$ of $U(\Gamma)$ such that the following diagram commutes:
\[
\xymatrix{
{\supp\Gamma}\ar[r]\ar[d]_{\alpha(\varphi)} & U(\Gamma)\ar[d]^{u(\varphi)}\\
{\supp\Gamma}\ar[r] & U(\Gamma)
}
\]
where the horizontal arrows are the canonical imbeddings. This gives an action of $\Aut(\Gamma)$ by automorphisms of the group $U(\Gamma)$.
The kernel of this action is $\Stab(\Gamma)$, so we may regard $W(\Gamma)=\Aut(\Gamma)/\Stab(\Gamma)$ as a subgroup of $\Aut(U(\Gamma))$.

A grading $\Gamma$ is said to be {\em fine} if it cannot be refined in the class of (abelian) group gradings. Any $G$-grading on a finite-dimensional algebra $\cA$ is induced from some fine grading $\Gamma$ by a homomorphism $\alpha\colon U(\Gamma)\to G$ as follows: $\cA_g=\bigoplus_{u\in\alpha^{-1}(g)}\cA_u$ for all $g\in G$.

From now on, we assume that all grading groups are {\em abelian} and the ground field $\FF$ is {\em algebraically closed}. A description of fine gradings on the matrix algebras $M_n(\FF)$ was obtained in \cite{HPP98,BSZ} for characteristic $0$ and extended to characteristic $p$ in \cite{BZ03}. All fine gradings on the octonion algebra $\cC$ were described in \cite{E98}. All fine gradings on the Albert algebra $\cA$ in characteristic $0$ were classified in \cite{DM_f4}; the same classification was shown to be valid in characterisitc $p\neq 2$ in \cite{Albert}.

The Weyl groups were computed for some special cases of fine gradings on $M_n(\mathbb{C})$ in \cite{HPPT,PST,Han}. Here we compute them for all fine gradings on $M_n(\FF)$ over any algebraically closed field $\FF$ --- see Section \ref{se:matrix}. In Section \ref{se:octonions}, we compute the Weyl groups of fine gradings on the algebra of octonions. In Section \ref{se:Albert}, we compute them for the Albert algebra assuming that the characteristic is not $2$.


\section{Matrix algebras}\label{se:matrix}

The goal of this section is to compute the Weyl groups of all fine gradings on the matrix algebra $M_n(\FF)$. The proof will use graded modules over $M_n(\FF)$, so we will first state the classification of fine gradings in that language. All algebras in this section will be assumed {\em associative}. All algebras and modules will be assumed {\em finite-dimensional}.

\subsection{Gradings on matrix algebras}

Let $G$ be a group. A vector space $V$ is {\em $G$-graded} if it is equipped with a decomposition $V=\bigoplus_{g\in G}V_g$. A {\em graded subspace} $W\subset V$ is a subspace satisfying $W=\bigoplus_{g\in G}(V_g\cap W)$, so $W$ inherits a $G$-grading from $V$. We extend Definitions \ref{df:equ_grad} and \ref{df:iso_grad} to graded vector spaces. For $g\in G$, we define the {\em shift} $V^{[g]}$ of a $G$-graded vector space $V$ by setting $V^{[g]}_{hg}\bydef V_h$, $h\in G$.

Let $\cR$ be a $G$-graded algebra. A {\em graded left $\cR$-module} is a left $\cR$-module $V$ that is also a $G$-graded vector space such that
$\cR_g V_h\subset V_{gh}$ for all $g,h\in G$. A {\em graded right $\cR$-module} is defined similarly. A {\em homomorphism of graded $\cR$-modules} $f\colon V\to W$ is a homomorphism of $\cR$-modules such that $f(V_g)\subset W_g$ for all $g\in G$. 

We will follow the convention of writing homomorphisms of left modules on the right and homomorphisms of right modules on the left. Let $V$ and $W$ be graded left $\cR$-modules. Regarding $V$ and $W$ as $G$-graded vector spaces, we have the graded space $\Hom(V,W)=\bigoplus_{g\in G}\Hom_g(V,W)$ where
\[
\Hom_g(V,W)\bydef\{f\colon V\to W\;|\;(V_h)f\subset W_{hg}\;\mbox{ for all }\;h\in G\}.
\]
The space $\Hom_\cR(V,W)$ is a graded subspace in $\Hom(V,W)$. When $W=V$, we obtain a $G$-graded algebra $\End_\cR(V)\bydef\Hom_\cR(V,V)$.

A $G$-graded algebra $\cD$ is said to be a {\em graded division algebra} if it is unital and every nonzero homogeneous element has an inverse. Let $T\subset G$ be the support of $\cD$. Then $T$ is a subgroup of $G$. Any graded $\cD$-module $V$ is free and can be decomposed canonically into the direct sum of (nonzero) isotypical components:
\[
V=V_1\oplus\cdots\oplus V_s
\]
where $V_i$ is the sum of all graded submodules that are isomorphic to some fixed $\cD^{[g_i]}$, $g_i\in G$. The elements $g_1,\ldots,g_s$ are not uniquely determined, but their cosets $g_1 T,\ldots,g_s T$ are determined up to permutation. Write
\begin{equation}\label{df:gamma}
\gamma=(g_1,\ldots,g_s)\quad\mbox{where}\quad g_i^{-1}g_j\notin T\mbox{ for }i\neq j.
\end{equation}
If $\{v_1,\ldots,v_n\}$ is a homogeneous $\cD$-basis in $V$, then, for each $i$, the subset
\[
\{v_j\;|\;\deg v_j\in g_i T\}
\]
is a $\cD$-basis for $V_i$. Let $k_i=\dim_{\cD}V_i$ and write
\begin{equation}\label{df:kappa}
\kappa=(k_1,\ldots,k_s).
\end{equation}
Conversely, for a given pair $(\kappa,\gamma)$, let $V(G,\cD,\kappa,\gamma)$ be the right $\cD$-module that has a homogeneous $\cD$-basis consisting of $k_i$ elements of degree $g_i$, $i=1,\ldots,s$. Denote the $G$-graded algebra $\End_{\cD}(V)$ by $\cM(G,\cD,\kappa,\gamma)$.

We will say that a $G$-grading on $M_n(\FF)$ is a {\em division grading} if $M_n(\FF)$ is a graded division algebra. Such gradings can be constructed as follows. Suppose we have a finite subgroup $T\subset G$ such that there exists a {\em nondegenerate alternating bicharacter} $\beta\colon T\times T\to\FF^\times$, i.e., a function that is multiplicative in each variable and satisfies the following two properties: $\beta(t,t)=1$ for all $t\in T$ (alternating) and $\beta(u,T)=1$ implies $u=e$ (nondegenerate). Then $T$ admits a ``symplectic basis'', i.e., there exists a decomposition of $T$ into the direct product of cyclic subgroups:
\begin{equation}\label{decomp_T}
T=(H_1'\times H_1'')\times\cdots\times (H_r'\times H_r'')
\end{equation}
such that $H_i'\times H_i''$ and $H_j'\times H_j''$ are $\beta$-orthogonal for $i\neq j$, and $H_i'$ and $H_i''$ are in duality by $\beta$. Denote by $\ell_i$ the order of $H_i'$ and $H_i''$. (We may assume without loss of generality that $\ell_i$ are prime powers.) If we pick generators $a_i$ and $b_i$ for $H_i'$ and $H_i''$, respectively, then $\veps_i\bydef\beta(a_i,b_i)$ is a primitive $\ell_i$-th root of unity, and all other values of $\beta$ on the elements $a_1,b_1,\ldots,a_r,b_r$ are 1. Define the following elements of the algebra $M_{\ell_1}(\FF)\ot\cdots\ot M_{\ell_r}(\FF)$:
\begin{equation*}
X_{a_i}=I\ot\cdots I\ot X_i\ot I\ot\cdots I\quad\mbox{and}\quad X_{b_i}=I\ot\cdots I\ot Y_i\ot I\ot\cdots I,
\end{equation*}
where
\begin{equation*}
X_i=\begin{bmatrix}
\veps_i^{n-1} & 0                   & 0           & \ldots      & 0       & 0\\
0             & \veps_i^{n-2}       & 0           & \ldots      & 0       & 0\\
\ldots        &                     &             &             &         &  \\[3pt]
0             & 0                   & 0           & \ldots      & \veps_i & 0\\
0             & 0                   & 0           & \ldots      & 0       & 1
\end{bmatrix}\mbox{ and }
Y_i=\begin{bmatrix}
0 & 1 & 0 & \ldots & 0 & 0\\
0 & 0 & 1 & \ldots & 0 & 0\\
\ldots & & & & \\[3pt]
0 & 0 & 0 & \ldots & 0 & 1\\
1 & 0 & 0 & \ldots & 0 & 0
\end{bmatrix}
\end{equation*}
are the generalized Pauli matrices in the $i$-th factor, $M_{\ell_i}(\FF)$. Finally, set
\[
X_{(a_1^{i_1},b_1^{j_1},\ldots,a_r^{i_r},b_r^{j_r})}=X_{a_1}^{i_1}X_{b_1}^{j_1}\cdots X_{a_r}^{i_r}X_{b_r}^{j_r}.
\]
Identify $M_{\ell_1}(\FF)\ot\cdots\ot M_{\ell_r}(\FF)$ with $M_\ell(\FF)$, $\ell=\ell_1\cdots\ell_r$, via Kronecker product. Then
\begin{equation}\label{division_grading}
M_\ell(\FF)=\bigoplus_{t\in T}\FF X_t
\end{equation}
is a division grading with support $T$. Note that by construction we have
\begin{equation}\label{commutation_relations}
X_u X_v=\beta(u,v)X_v X_u\quad\mbox{for all}\quad u,v\in T.
\end{equation}

\begin{theorem}[\cite{BSZ,BZ03}]\label{class_division}
Let $T$ be a finite abelian group and let $\FF$ be an algebraically closed field. There exists a division grading on the matrix algebra $M_\ell(\FF)$ with support $T$ if and only if $\chr{\FF}$ does not divide $\ell$ and $T\cong\ZZ_{\ell_1}^2\times\cdots\times\ZZ_{\ell_r}^2$ where $\ell_1\cdots\ell_r=\ell$. All such gradings are equivalent to the grading given by \eqref{division_grading}.\hfill{$\square$}
\end{theorem}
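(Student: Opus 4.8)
The plan is to reduce the whole classification to the structure of the \emph{commutation bicharacter} attached to a division grading. First I would pin down the local structure. Suppose $M_\ell(\FF)=\bigoplus_{t\in T}(M_\ell)_t$ is a division grading with support $T$, which is a subgroup of $G$ by the general remark on graded division algebras. The identity component $(M_\ell)_e$ is a finite-dimensional division algebra over the algebraically closed field $\FF$, hence equals $\FF\cdot 1$; since left multiplication by a fixed invertible $X_t\in(M_\ell)_t$ is an $\FF$-linear bijection $(M_\ell)_e\to(M_\ell)_t$, every component is one-dimensional and $|T|=\dim M_\ell(\FF)=\ell^2$. Choosing a nonzero $X_t$ in each $(M_\ell)_t$, the products satisfy $X_uX_v=\sigma(u,v)X_{uv}$ for a $2$-cocycle $\sigma\colon T\times T\to\FF^\times$, so $M_\ell(\FF)$ is the twisted group algebra $\FF^\sigma[T]$. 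Setting $\beta(u,v)\bydef\sigma(u,v)\sigma(v,u)^{-1}$ gives $X_uX_v=\beta(u,v)X_vX_u$ as in \eqref{commutation_relations}, and associativity shows $\beta$ is an alternating bicharacter. Because $X_t$ is central exactly when $t$ lies in the radical of $\beta$, while the center of $M_\ell(\FF)$ equals $\FF\cdot 1$, this radical is trivial and $\beta$ is nondegenerate.

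Next I would read off the two stated conditions from the pair $(T,\beta)$. Every value of $\beta$ is a root of unity whose order divides the order of its argument, so restricting to the $p$-primary component $T_p$ (with $p=\chr\FF$) forces $\beta|_{T_p}$ to take values among $p$-power roots of unity, of which $\FF$ contains only $1$; hence $T_p$ would lie in the radical of $\beta$, and nondegeneracy forces $T_p=0$, i.e. $\chr\FF\nmid|T|=\ell^2$, equivalently $\chr\FF\nmid\ell$. To obtain the group structure I would build a symplectic basis: working inside a primary component, pick $a$ of maximal order $\ell_1$ (a prime power); nondegeneracy makes $\beta(a,\cdot)$ have order exactly $\ell_1$, yielding $b$ with $\beta(a,b)$ a primitive $\ell_1$-th root of unity, so $\langle a\rangle\times\langle b\rangle\cong\ZZ_{\ell_1}^2$ is $\beta$-nondegenerate; passing to its $\beta$-orthogonal complement and inducting produces \eqref{decomp_T} and hence $T\cong\ZZ_{\ell_1}^2\times\cdots\times\ZZ_{\ell_r}^2$ with $\ell_1\cdots\ell_r=\ell$.

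For the converse I would verify that the explicit construction preceding the theorem works whenever $\chr\FF\nmid\ell$ and $T\cong\prod_i\ZZ_{\ell_i}^2$. The hypothesis $\chr\FF\nmid\ell$ guarantees that $\FF$ contains the primitive $\ell_i$-th roots of unity $\veps_i$ needed to define the generalized Pauli matrices. The standard fact that the clock and shift matrices $X_i,Y_i$ generate $M_{\ell_i}(\FF)$, with $\{X_i^aY_i^b\}_{0\le a,b<\ell_i}$ a basis, shows after taking Kronecker products that $\{X_t\}_{t\in T}$ is a basis of $M_\ell(\FF)$ consisting of invertible homogeneous elements; the relations \eqref{commutation_relations} follow at once from $Y_iX_i=\veps_iX_iY_i$. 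Thus \eqref{division_grading} is a division grading with support $T$, whose commutation bicharacter is the prescribed $\beta$.

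Finally, to show that any two such gradings are equivalent I would prove that the graded-algebra isomorphism type of $\FF^\sigma[T]$ depends only on $\beta$. The key point, and what I expect to be the main obstacle, is the cohomological lemma that over an algebraically closed field with $\chr\FF\nmid|T|$ a $2$-cocycle on the finite abelian group $T$ is determined up to coboundary by its commutation bicharacter $\beta$ --- equivalently, that a symmetric cocycle is a coboundary because $\FF^\times$ is divisible. Granting this, a symplectic basis of $(T,\beta)$ identifies $\sigma$, up to coboundary, with the standard symplectic cocycle attached to the chosen $\veps_i$, which realizes the grading (after rescaling the $X_t$) as an instance of the tensor-product model \eqref{division_grading}. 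Since an equivalence between two such gradings must induce a group isomorphism of their supports, and since any two nondegenerate alternating bicharacters on $T$ share the same symplectic type and are carried into one another by an automorphism of $T$ coming from the symplectic-basis construction, relabeling the support via such an automorphism turns an arbitrary division grading with support $T$ into \eqref{division_grading}. This proves that all such gradings are equivalent.
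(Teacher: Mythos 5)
Theorem~\ref{class_division} is quoted in the paper from \cite{BSZ,BZ03} and is not proved there, so there is no internal proof to compare against; your argument is essentially the standard one from the cited literature, and it is correct. The chain of reductions is sound: the identity component is $\FF 1$ because $\FF$ is algebraically closed, so all components are one-dimensional and $M_\ell(\FF)$ is a twisted group algebra $\FF^\sigma[T]$ with $|T|=\ell^2$; the commutation bicharacter $\beta$ is nondegenerate because the centre is $\FF1$; the $p$-primary part of $T$ (for $p=\chr{\FF}$) dies because the only $p$-power root of unity in $\FF$ is $1$; the symplectic-basis induction gives $T\cong\ZZ_{\ell_1}^2\times\cdots\times\ZZ_{\ell_r}^2$; and uniqueness follows since $\mathrm{Ext}^1(T,\FF^\times)=0$ forces symmetric cocycles to be coboundaries, so the graded isomorphism class of $\FF^\sigma[T]$ is determined by $\beta$, and any two nondegenerate alternating bicharacters on $T$ are conjugate under $\Aut(T)$ after adjusting each $b_i$ by a unit exponent to match the chosen primitive roots $\veps_i$. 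Two small points worth making explicit in a full write-up: that distinct primary components of $T$ are automatically $\beta$-orthogonal (so the characteristic argument and the symplectic induction may indeed be run one primary component at a time), and that $\beta(a,\cdot)$ has order exactly $\mathrm{ord}(a)$ because $t\mapsto\beta(t,\cdot)$ is an injective homomorphism $T\to\wh{T}$. An alternative, cohomology-free route to the uniqueness statement is the presentation argument the paper itself uses inside Proposition~\ref{aut_division}: after rescaling so that $X_{a_i}^{\ell_i}=X_{b_i}^{\ell_i}=1$, the relations together with \eqref{commutation_relations} are defining for the algebra, which yields the required equivalence directly.
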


\begin{theorem}[\cite{BSZ,BZ03}]\label{class_matrix}
Let $G$ be an abelian group and let $\cR$ be a matrix algebra with a $G$-grading. Then $\cR$ is isomorphic to some $\cM(G,\cD,\kappa,\gamma)$ where $\cD$ is a matrix algebra with division grading, and $\kappa$ and $\gamma$ are as in \eqref{df:gamma} and \eqref{df:kappa}.\hfill{$\square$}
\end{theorem}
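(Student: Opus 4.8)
The statement is the graded analogue of the Artin--Wedderburn theorem for the simple algebra $M_n(\FF)$, so the plan is to build the graded representation theory by hand and then invoke Theorem~\ref{class_division}. Write $\cR=M_n(\FF)$ with its fixed $G$-grading. Because $\cR$ is simple as an ungraded algebra, it has no proper nonzero two-sided ideals, hence a fortiori no proper nonzero \emph{graded} two-sided ideals; thus the grading is graded-simple. By finite-dimensionality $\cR$ contains a minimal graded left ideal $I$. Following the convention that homomorphisms of left modules act on the right, set $\cD\bydef\End_\cR(I)$, so that $I$ becomes a graded \emph{right} $\cD$-module and $\cR$ acts on $I$ by left multiplication.

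The first technical step is a graded Schur's lemma: every nonzero homogeneous $\cR$-endomorphism of $I$ is invertible. This is where minimality is used --- a homogeneous $f\in\End_\cR(I)$ of degree $g$ has graded kernel and graded image, both graded submodules of $I$, so minimality forces $f$ to be an isomorphism of $I$ onto the shift $I^{[g]}$. Consequently $\cD$ is a graded division algebra with support a subgroup $T\bydef\supp\cD$ of $G$, and every nonzero homogeneous element of $\cD$ is invertible.

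Next I would establish the graded density statement that left multiplication gives an isomorphism of $G$-graded algebras $\cR\xrightarrow{\ \sim\ }\End_\cD(I)$. Injectivity is immediate, since the kernel is a two-sided ideal of the simple algebra $\cR$ not containing $1$. Surjectivity is the heart of the matter: it is the content of the graded Jacobson density theorem, which for the finite-dimensional graded-semisimple $\cR$ amounts to the graded double-centralizer property $\End_\cD(I)=\cR$. Setting $V\bydef I$ and decomposing it into its isotypical components as in the discussion preceding \eqref{df:gamma} and \eqref{df:kappa} yields the invariants $\gamma$ and $\kappa$, whence $V\cong V(G,\cD,\kappa,\gamma)$ and $\cR\cong\cM(G,\cD,\kappa,\gamma)$ as $G$-graded algebras.

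Finally I must recognize $\cD$ as a matrix algebra carrying a division grading. Since $\FF$ is algebraically closed and $\cD_e$ is a finite-dimensional division algebra over $\FF$, we have $\cD_e=\FF$; multiplying by an invertible homogeneous element shows each $\cD_t$ is one-dimensional, so $\cD=\bigoplus_{t\in T}\FF X_t$ with the $X_t$ invertible and $\dim_\FF\cD=|T|$. The commutator $X_uX_vX_u^{-1}X_v^{-1}$ lies in $\cD_e=\FF$ and defines an alternating bicharacter $\beta\colon T\times T\to\FF^\times$ as in \eqref{commutation_relations}; its nondegeneracy is equivalent to $\cD$ being simple as an ungraded algebra, which holds because $\End_\cD(V)\cong M_n(\FF)$ is simple. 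Theorem~\ref{class_division} then gives $\cD\cong M_\ell(\FF)$ with $T\cong\ZZ_{\ell_1}^2\times\cdots\times\ZZ_{\ell_r}^2$ and $\chr{\FF}\nmid\ell$, completing the identification. The main obstacle is the graded density/Schur package of the middle two paragraphs, where every map must be tracked through the grading shifts; after that, the reduction to the explicit normal form is bookkeeping resting on Theorem~\ref{class_division}.
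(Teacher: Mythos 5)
The paper does not prove this theorem; it is quoted from \cite{BSZ,BZ03} and marked as such, so there is no in-paper argument to compare against. Your outline is the standard graded Artin--Wedderburn proof used in those references, and it is sound: graded simplicity, a minimal graded left ideal $I$, the graded Schur lemma making $\cD=\End_\cR(I)$ a graded division algebra, the double-centralizer identification $\cR\cong\End_\cD(I)$ (here the ungraded double-centralizer theorem for the simple algebra $M_n(\FF)$ already suffices, since $\Hom_\cR(I,I)$ is automatically a graded subspace of $\Hom(I,I)$, so no separate ``graded density'' theorem is really needed), the isotypic decomposition giving $\kappa$ and $\gamma$, and the identification of $\cD$ as a matrix algebra with division grading via $\cD_e=\FF$, one-dimensional components, and nondegeneracy of $\beta$ from simplicity of $\cD$. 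This is also the same circle of ideas the paper itself deploys in its proof of Proposition~\ref{equivalence_graded_simple} (minimal graded left ideal generated by a homogeneous idempotent $e$, with $\End_\cR(I)\cong e\cR e$), so your route is consistent with the paper's methods.
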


The following notion and result will be crucial to our computation of Weyl groups.

\begin{df}
Let $G$ and $H$ be groups. Let $\cD$ be a $G$-graded algebra and $\cD'$ an $H$-graded algebra. Let $V$ be a graded right $\cD$-module and $V'$ a graded right $\cD'$-module.
An {\em equivalence from $(\cD,V)$ to $(\cD',V')$} is a pair $(\psi_0,\psi_1)$ such that $\psi_0\colon\cD\to\cD'$ is an equivalence of graded algebras and
$\psi_1\colon V\to V'$ is an equivalence of graded vector spaces, and $\psi_1(vd)=\psi_1(v)\psi_0(d)$ for all $v\in V$ and $d\in\cD$.
\end{df}

\begin{proposition}\label{equivalence_graded_simple}
Let $G$ and $H$ be groups. Let $\cD$ be a $G$-graded algebra and $\cD'$ an $H$-graded algebra. Suppose that $\cD$ and $\cD'$ are graded division algebras. Let $V$ be a graded right $\cD$-module and $V'$ a graded right $\cD'$-module. Let $\cR=\End_{\cD}(V)$ and $\cR'=\End_{\cD'}(V')$.
If $\psi\colon\cR\to\cR'$ is an equivalence of graded algebras, then there exists an equivalence $(\psi_0,\psi_1)$ from $(\cD,V)$ to $(\cD',V')$ such that $\psi_1(rv)=\psi(r)\psi_1(v)$ for all $r\in\cR$ and $v\in V$.
\end{proposition}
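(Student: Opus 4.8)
The plan is to recover $V$, together with its right $\cD$-action, intrinsically from the graded algebra $\cR$, and then transport everything through $\psi$. First I would fix a homogeneous $\cD$-basis $v_1,\dots,v_n$ of $V$ and let $e\in\cR$ be the $\cD$-linear projection of $V$ onto $\cD v_1$ along $\cD v_2\oplus\cdots\oplus\cD v_n$. Since $v_1$ is homogeneous, $e$ is a homogeneous idempotent lying in the identity component of $\cR$, the left ideal $\cR e$ is a graded left $\cR$-module, and the map $\Phi\colon\cR e\to V$, $f\mapsto f v_1$ (homomorphisms of the right $\cD$-module $V$ being written on the left), is an isomorphism of graded left $\cR$-modules up to the shift by $\deg v_1$: it is $\cR$-linear since $\Phi(rf)=(rf)v_1=r\Phi(f)$, injective because $f=fe$ is determined by its value on $\cD v_1$, and surjective because $V$ is generated over $\cR$ by $v_1$. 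Thus $V$ is realized, up to a shift, as the minimal graded left ideal $\cR e$.

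Now $\psi$, being an equivalence of graded algebras, is in particular an isomorphism of the underlying algebras, so $e'\bydef\psi(e)$ is a homogeneous idempotent of $\cR'$ with $\psi(\cR e)=\cR' e'$. Repeating the construction for $(\cD',V')$ with a homogeneous generator $w'$ of the one-dimensional graded $\cD'$-submodule $e'V'$, I obtain a graded isomorphism $\Phi'\colon\cR' e'\to V'$, $g\mapsto g w'$, up to a shift. Setting
\[
\psi_1\bydef\Phi'\circ(\psi|_{\cR e})\circ\Phi^{-1}\colon V\to V',
\]
I get an equivalence of graded vector spaces, being a composite of an equivalence and two graded isomorphisms up to shift. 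The left-compatibility is then formal: using that $\Phi,\Phi'$ are linear over $\cR,\cR'$ and that $\psi|_{\cR e}(rf)=\psi(r)\,\psi|_{\cR e}(f)$, one computes for $r\in\cR$, $v\in V$ that
\[
\psi_1(rv)=\Phi'\bigl(\psi(r)\,(\psi|_{\cR e})(\Phi^{-1}(v))\bigr)=\psi(r)\,\psi_1(v).
\]

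It remains to produce $\psi_0$ and the right-compatibility $\psi_1(vd)=\psi_1(v)\psi_0(d)$. For this I would identify $\cD$ with $\End_\cR(V)$, the algebra of endomorphisms of $V$ as a graded left $\cR$-module, written on the right: right multiplication $\rho_d\colon v\mapsto vd$ by a homogeneous $d$ of degree $t$ commutes with the $\cR$-action and sends $V_h$ into $V_{ht}$, and the graded analogue of the double centralizer theorem for modules over a graded division algebra gives that $d\mapsto\rho_d$ is an isomorphism $\cD\to\End_\cR(V)$ of graded algebras, and likewise $\cD'\cong\End_{\cR'}(V')$. Conjugation by $\psi_1$ carries $\End_\cR(V)$ onto $\End_{\cR'}(V')$: if $\theta$ is $\cR$-linear then, using $\psi_1(ru)=\psi(r)\psi_1(u)$ and the surjectivity of $\psi$, its transport by $\psi_1$ is $\cR'$-linear. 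Being conjugation by the graded equivalence $\psi_1$, this is a graded equivalence, so under the identifications above it defines a graded equivalence $\psi_0\colon\cD\to\cD'$. Unwinding the definitions, $\psi_0(d)$ corresponds to the transport of $\rho_d$ by $\psi_1$, and applying this to $\psi_1(v)$ yields precisely $\psi_1(v)\psi_0(d)=\psi_1(vd)$. Hence $(\psi_0,\psi_1)$ is the required equivalence from $(\cD,V)$ to $(\cD',V')$.

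The main obstacle I anticipate is the structure-theoretic input rather than the bookkeeping: one must know that $V$ is, up to shift, the unique graded-simple left $\cR$-module—so that $\cR e$ is genuinely minimal and $\psi(e)$ again cuts out a copy of $V'$—and one must establish the graded double centralizer isomorphism $\cD\cong\End_\cR(V)$, together with the matching of the gradings, for a graded division algebra $\cD$ that need not be a division ring in the ungraded sense. A secondary point requiring care is the consistent tracking of the degree shifts introduced by $\Phi$ and $\Phi'$ and of the support bijections attached to $\psi$, $\psi_0$ and $\psi_1$, so that each map is verified to be a genuine equivalence of graded objects rather than merely an ungraded isomorphism.
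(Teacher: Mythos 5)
Your proposal follows essentially the same route as the paper's proof: realize $V$, up to a shift, as a minimal graded left ideal $\cR e$ generated by a homogeneous idempotent, transport it through $\psi$ to $\cR'\psi(e)\cong V'$, and recover $\psi_0$ from the identification $\cD\cong\End_\cR(V)$ (the paper packages this last step as $\cD\cong e\cR e\xrightarrow{\;\psi\;}e'\cR'e'\cong\cD'$ rather than as conjugation by $\psi_1$, which makes the homogeneity of $\psi_0$ immediate, whereas conjugation by a graded equivalence does not in general preserve homogeneity of operators and here needs the invertibility of homogeneous elements of $\cD'$). The structure-theoretic inputs you flag --- graded simplicity of $V$ and of $\cR e$, and the graded double centralizer isomorphism --- are exactly the ones the paper also relies on, so the argument is correct.
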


\begin{proof}
Let $I\subset\cR$ be a minimal graded left ideal. A standard argument shows that $I$ is generated by a homogeneous idempotent. Indeed, we have $I^2\neq 0$, because otherwise $I\cR$ would be a graded two-sided ideal with the property $(I\cR)^2=0$, which is impossible because $\cR$ is graded simple. Pick a homogeneous $x\in I$ such that $Ix\neq 0$. By the minimality of $I$, we have $Ix=I$ and ${\rm ann}_I(x)=0$, where ${\rm ann}_I(x)\bydef\{r\in I\;|\;rx=0\}$. Hence there exists $e\in I$ such that $ex=x$. Replacing $e$ by its identity component, we may assume that $e$ is homogeneous. Since $e^2-e\in{\rm ann}_I(x)$, we conclude that $e^2=e$. Since $\cR e\neq 0$, we have $\cR e=I$ by minimality.

Since $V$ is graded simple as a left $\cR$-module and $IV$ is a graded submodule of $V$, we have either $IV=0$ or $IV=V$. But the action of $\cR$ on $V$ is faithful, so $IV=V$. Pick a homogeneous $v\in V$ such that $Iv\neq 0$ and let $g=\deg v$. Then the map $I\to V$ given by $r\mapsto rv$ is a homomorphism of $\cR$-modules. By graded simplicity of $I$ and $V$, this map is an isomorphism. It sends $I_a$ to $V_{ag}$, $a\in G$, so it is an isomorphism of graded $\cR$-modules when regarded as a map $I^{[g]}\to V$. Now, $\End_\cR(I)$ can be identified with $e\cR e$ (as a graded algebra) in the usual way. Indeed, the right multiplication by a homogeneous $x\in e\cR e$ gives an endomorphism of $I$, which has the same degree as $x$. Conversely, any endomorphism of $I$ is easily seen to coincide with the right multiplication by the image of $e$. It follows that we have an isomorphism of graded algebras $e\cR e\to\End_\cR(V^{[g^{-1}]})$ sending $x\in e\cR e$ to the endomorphism $rv\mapsto rxv$, $r\in I$. 
Since $\cD=\End_{\cR}(V)$, we have obtained an equivalence $(e\cR e,I)\to(\cD,V)$, where $I\to V$ is an isomorphism of $\cR$-modules.

Since $I'=\psi(I)$ is a minimal graded left ideal of $\cR'$, which is generated by the homogeneous idempotent $e'=\psi(e)$, 
we also have an equivalence $(e'\cR' e',I')\to(\cD',V')$. Finally, restricting $\psi$ yields equivalences $\wt{\psi}_0\colon e\cR e\to e'\cR' e'$ and $\wt{\psi}_1\colon I\to I'$ such that $\psi_1(rx)=\psi(r)\psi_1(x)$ for $r\in\cR$ and $x\in I$. The compositions $\psi_0\colon\cD\to e\cR e\stackrel{\wt{\psi}_0}{\to} e'\cR' e'\to\cD'$ and $\psi_1\colon V\to I\stackrel{\wt{\psi}_1}{\to} I'\to V'$ have the desired properties.
\end{proof}

The map $\psi$ corresponding to $(\psi_0,\psi_1)$ in Proposition \ref{equivalence_graded_simple} can be expressed in the language of matrices as follows. Let  $\{v_1,\ldots,v_k\}$ be a homogeneous $\cD$-basis in $V$. Then any $r\in\cR$ is represented by a matrix $X=(x_{ij})\in M_k(\cD)$ relative to this $\cD$-basis. Also,  $\{\psi_1(v_1),\ldots,\psi_1(v_n)\}$ is a homogeneous $\cD'$-basis in $V'$, and $\psi$ is given by
\begin{equation}\label{isomorphism_in_matrix_form}
M_k(\cD)\to M_n(\cD')\colon (x_{ij})\mapsto (\psi_0(x_{ij})).
\end{equation}

From Theorems \ref{class_division} and \ref{class_matrix}, it is easy to obtain all fine gradings on $M_n(\FF)$. They were described in \cite{HPP98} over the field of complex numbers.

\begin{df}
Let $\cD=M_\ell(\FF)$ with a division grading \eqref{division_grading}. Let $k\geq 1$ be an integer. Let $\wt{G}$ be the direct product of $T$ and the free abelian group generated by the symbols $\wt{g}_1,\ldots,\wt{g}_k$.  Let $\cM(\cD,k)\bydef\cM(\wt{G},\cD,\kappa,\wt{\gamma})$ where $\kappa=(1,\ldots,1)$ and $\wt{\gamma}=(\wt{g}_1,\ldots,\wt{g}_k)$. Then $\cM(\cD,k)$ is isomorphic to the matrix algebra $M_n(\FF)$ where $n=k\ell$. We will denote the $\wt{G}$-grading on $M_n(\FF)$ arising from this isomorphism by $\M(\cD,k)$ or, abusing notation, by $\M(T,k)$, since the equivalence class of $\cD$ is uniquely determined by $T$.
\end{df}

$\M(\cD,k)$ can be described explicitly as follows. Pick a $\cD$-basis $\{v_1,\ldots,v_k\}$ in $V$ with $\deg(v_i)=\wt{g}_i$, $i=1,\ldots,k$. Then any $r\in\cR$ is represented by a matrix $X=(x_{ij})\in M_k(\cD)$ relative to this $\cD$-basis. Identify $M_k(\cD)$ with $M_k(\FF)\ot\cD$ via Kronecker product. Let $E_{ij}$ be the matrix units in $M_k(\FF)$. Then $\M(\cD,k)$ is given by
\[
\deg(E_{ij}\ot d)=\wt{g}_i(\deg d)\wt{g}_j^{-1}\quad\mbox{for all homogeneous}\quad d\in\cD,\;i,j=1,\ldots,k.
\]
This grading is fine in the class of abelian group gradings. The support consists of the elements $\wt{g}_i t\wt{g}_j^{-1}$, $t\in T$. The subgroup $\wt{G}^0$ that they generate is isomorphic to $\ZZ^{k-1}\times T$. It is the universal group of $\M(\cD,k)$.

\begin{corollary}\label{matr_fine}
Let $\Gamma$ be a fine abelian group grading on the matrix algebra $\cR=M_n(\FF)$ over an algebraically closed field $\FF$. Then $\Gamma$ is equivalent to some $\M(T,k)$ where $T\cong\ZZ_{\ell_1}^2\times\cdots\times\ZZ_{\ell_r}^2$ and  $k\ell_1\cdots\ell_r=n$. Two gradings $\M(T_1,k_1)$ and $\M(T_2,k_2)$ are equivalent if and only if $T_1\cong T_2$ and  $k_1=k_2$.\hfill{$\square$}
\end{corollary}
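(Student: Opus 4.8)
The plan is to prove Corollary~\ref{matr_fine} in two stages: first that every fine grading is equivalent to some $\M(T,k)$, and second the equivalence criterion between two such gradings. The first stage follows almost directly from the two classification theorems. Starting from Theorem~\ref{class_matrix}, any $G$-grading on $M_n(\FF)$ is isomorphic to some $\cM(G,\cD,\kappa,\gamma)$, where $\cD$ carries a division grading. By Theorem~\ref{class_division}, $\cD\cong M_\ell(\FF)$ with support $T\cong\ZZ_{\ell_1}^2\times\cdots\times\ZZ_{\ell_r}^2$ and $\chr\FF\nmid\ell$. The key observation is that fineness forces $\kappa=(1,\ldots,1)$ and $\gamma=(g_1,\ldots,g_k)$ to have cosets $g_iT$ that are "as free as possible". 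Concretely, I would argue that if any $k_i\geq 2$, or if the group generated by the $g_i^{-1}g_j$ together with $T$ were a proper quotient of $\ZZ^{k-1}\times T$, then one could refine the grading --- either by splitting an isotypical component of dimension $\geq 2$, or by realizing the same vector-space decomposition as a grading by the larger universal group $\wt G^0\cong\ZZ^{k-1}\times T$. Thus up to equivalence we may take all shifts distinct modulo $T$, recovering precisely $\M(T,k)$ with $k\ell_1\cdots\ell_r=n$.

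For the second stage, I would first establish the "only if" direction, which is the more delicate one and which I expect to be the main obstacle. Suppose $\psi\colon\M(T_1,k_1)\to\M(T_2,k_2)$ is an equivalence of graded algebras. Writing $\cR_i=\End_{\cD_i}(V_i)$ with the $\cD_i$ the corresponding graded division algebras, Proposition~\ref{equivalence_graded_simple} supplies an equivalence $(\psi_0,\psi_1)$ from $(\cD_1,V_1)$ to $(\cD_2,V_2)$ compatible with $\psi$. In particular $\psi_0$ is an equivalence of the graded division algebras $\cD_1\to\cD_2$, so the supports are isomorphic, $T_1\cong T_2$, and hence $\ell_1=\ell_2=:\ell$ (the support of a division grading determines $\ell$). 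Since $\psi_1$ is an equivalence of graded vector spaces intertwining the $\cD$-actions, it induces a bijection between the homogeneous $\cD_1$-bases of $V_1$ and $\cD_2$-bases of $V_2$; comparing $\FF$-dimensions gives $k_1\ell=n=k_2\ell$, so $k_1=k_2$. The subtle point to check carefully is that $\psi_0$ being merely an \emph{equivalence} (not an isomorphism) still forces $|T_1|=|T_2|$ and the isomorphism type of $T_i$ --- this is exactly Theorem~\ref{class_division}, which says the equivalence class of the division grading is determined by $T$, so no information is lost.

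The "if" direction is the easy half: it amounts to exhibiting an explicit equivalence when $T_1\cong T_2$ and $k_1=k_2$. Given a group isomorphism $T_1\to T_2$, Theorem~\ref{class_division} guarantees the two division gradings on $M_\ell(\FF)$ are equivalent, yielding $\psi_0\colon\cD_1\to\cD_2$. Choosing homogeneous $\cD$-bases $\{v_1,\ldots,v_k\}$ and $\{v_1',\ldots,v_k'\}$ of $V_1$ and $V_2$ and declaring $\psi_1(v_i)=v_i'$ (extended $\cD$-semilinearly via $\psi_0$) produces the required equivalence of pairs; passing to $\End$-algebras through the matrix description \eqref{isomorphism_in_matrix_form}, namely $(x_{ij})\mapsto(\psi_0(x_{ij}))$, gives an equivalence $\M(T_1,k_1)\to\M(T_2,k_2)$. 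I would present this direction tersely since it is essentially the construction already built into the definition of $\M(\cD,k)$. The one place demanding care throughout is bookkeeping with the shifts $\wt g_i$ and the coset condition $g_i^{-1}g_j\notin T$, ensuring that the combinatorial data $(\kappa,\gamma)$ is faithfully read off from the graded module and not confused across inequivalent choices of the $g_i$.
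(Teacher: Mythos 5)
Your proposal is correct and follows exactly the route the paper intends: the corollary is stated there without proof, with only the remark that it is ``easy to obtain'' from Theorems \ref{class_division} and \ref{class_matrix}, and your argument fills in precisely that derivation, using Proposition \ref{equivalence_graded_simple} for the uniqueness half just as the paper sets up. The one small imprecision is that re-realizing the decomposition over the universal group is a \emph{proper} refinement only when distinct degrees $g_i t g_j^{-1}$ actually collide in $G$; when they do not, nothing changes, but in either case a fine $\Gamma$ must already coincide with $\M(T,k)$, so the conclusion stands.
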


\subsection{The Weyl group of $\M(T,k)$}

Let $\cR=\cM(\cD,k)$ and let $V$ be the associated graded module, with a $\cD$-basis $\{v_1,\ldots,v_k\}$ where $\deg(v_i)=\wt{g}_i$, $i=1,\ldots,k$. Note that the homogeneous components of $V$ have the form $\FF v_i d$ where $d$ is a nonzero homogeneous element of $\cD$. Applying Proposition \ref{equivalence_graded_simple} to an equivalence $\psi\colon\cR\to\cR$, we see that there exists an equivalence $(\psi_0,\psi_1)$ of $(\cD,V)$ to itself such that $\psi_1(rv)=\psi(r)\psi_1(v)$ for all $r\in\cR$ and $v\in V$. Write $\psi_1(v_j)=\sum_i v_i d_{ij}$ for some $d_{ij}\in\cD$ and set $\Psi\bydef(d_{ij})$. Then \eqref{isomorphism_in_matrix_form} implies that $\psi\colon\cR\to\cR$ is given by
\begin{equation}\label{automorphism_in_matrix_form}
X\mapsto\Psi\psi_0(X)\Psi^{-1}
\end{equation}
where $\psi_0$ acts on $X$ entry-wise. Since $\psi_1$ is an equivalence of the graded space $V$ to itself, it must send $v_i$ to some $v_j d$ where $d$ is a nonzero homogeneous element of $\cD$. Hence there exists a permutation $\pi\in\sg(k)$ and nonzero homogeneous $d_1,\ldots,d_k$ such that $\psi_1(v_i)=v_{\pi(i)}d_i$. In other words,  $\Psi$ is the monomial matrix $PD$ where $P$ is the permutation matrix corresponding to $\pi$ (i.e., the matrix having $1$ in the $(i,\pi^{-1}(i))$-th positions and zeros elsewhere) and $D$ is the diagonal matrix $\diag(d_1,\ldots,d_k)$.

We see that everything boils down to the equivalence $\psi_0$ of $\cD$ to itself. The following result describes all such $\psi_0$.

\begin{proposition}\label{aut_division}
Let $\cD$ be a matrix algebra with a division grading $\Gamma_0$ given by \eqref{division_grading}. Let $T$ be the support and let $\beta$ be the nondegenerate alternating bicharacter $T\times T\to\FF^\times$ determined by \eqref{commutation_relations}. Then the mapping that sends $t\in T$ to the inner automorphism $X\mapsto X_t X X_t^{-1}$ is an isomorphism between $T$ and $\Stab(\Gamma_0)$. The quotient group $\Aut(\Gamma_0)/\Stab(\Gamma_0)$ is isomorphic to $\Aut(T,\beta)$.
\end{proposition}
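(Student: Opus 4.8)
The plan is to exploit that every homogeneous component $\FF X_t$ is one-dimensional, so that an automorphism $\varphi\in\Aut(\Gamma_0)$ is completely encoded by the self-bijection $\alpha(\varphi)$ of $T$ it induces together with a family of scalars: $\varphi(X_t)=c_\varphi(t)X_{\alpha(\varphi)(t)}$ with $c_\varphi(t)\in\FF^\times$. Writing the product in the form $X_uX_v=\sigma(u,v)X_{uv}$, so that $\sigma\colon T\times T\to\FF^\times$ is a $2$-cocycle with $\beta(u,v)=\sigma(u,v)\sigma(v,u)^{-1}$, the requirement that $\varphi$ be multiplicative forces, upon comparing degrees of the nonzero products, that $\alpha(\varphi)$ is a group endomorphism; being bijective it lies in $\Aut(T)$. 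Applying $\varphi$ to the commutation relation $X_uX_v=\beta(u,v)X_vX_u$ and comparing the resulting scalars yields $\beta(\alpha(\varphi)u,\alpha(\varphi)v)=\beta(u,v)$, so the canonical homomorphism $\Aut(\Gamma_0)\to\sg(T)$, $\varphi\mapsto\alpha(\varphi)$, has image inside $\Aut(T,\beta)$. Its kernel is $\Stab(\Gamma_0)$ by definition, so both assertions reduce to identifying $\Stab(\Gamma_0)$ and proving surjectivity onto $\Aut(T,\beta)$.

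For the first assertion I would first note that $\varphi\in\Stab(\Gamma_0)$ means $\varphi(X_t)=c(t)X_t$, and multiplicativity (comparing against $\sigma$) forces $c\in\Hom(T,\FF^\times)$, while conversely every character gives such a stabilizer element; hence $\Stab(\Gamma_0)\cong\Hom(T,\FF^\times)$. The commutation relations give $X_tX_uX_t^{-1}=\beta(t,u)X_u$, so $\operatorname{Int}(X_t)$ is exactly the stabilizer element attached to the character $\beta(t,\cdot)$; since $X_sX_t$ and $X_{st}$ differ by a scalar, $t\mapsto\operatorname{Int}(X_t)$ is a homomorphism $T\to\Stab(\Gamma_0)$ corresponding to $T\to\Hom(T,\FF^\times)$, $t\mapsto\beta(t,\cdot)$. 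Nondegeneracy of $\beta$ makes this injective, and since $\FF$ is algebraically closed with $\chr{\FF}$ not dividing $\ell$ (Theorem~\ref{class_division}), hence not dividing the exponent of $T$, we have $|\Hom(T,\FF^\times)|=|T|$; therefore the map is bijective and $T\cong\Stab(\Gamma_0)$.

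The crux is surjectivity of $\varphi\mapsto\alpha(\varphi)$ onto $\Aut(T,\beta)$. Given $\alpha\in\Aut(T,\beta)$, I would try to produce an automorphism of the form $X_t\mapsto\lambda(t)X_{\alpha(t)}$; multiplicativity of such a map is equivalent to the identity $\sigma(\alpha u,\alpha v)\sigma(u,v)^{-1}=\lambda(u)\lambda(v)\lambda(uv)^{-1}$, that is, to the statement that the $2$-cocycle $\mu(u,v)\bydef\sigma(\alpha u,\alpha v)\sigma(u,v)^{-1}$ is a coboundary. The decisive point is that $\mu(u,v)\mu(v,u)^{-1}=\beta(\alpha u,\alpha v)\beta(u,v)^{-1}=1$, so $\mu$ is a \emph{symmetric} $2$-cocycle. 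Since $\FF$ is algebraically closed, $\FF^\times$ is divisible, hence injective as a $\ZZ$-module, so $\operatorname{Ext}^1_\ZZ(T,\FF^\times)=0$; equivalently, every symmetric $2$-cocycle on the finite abelian group $T$ with values in $\FF^\times$ is a coboundary. Thus the required $\lambda$ exists, the resulting linear map $\psi$ is an algebra automorphism permuting the components with $\alpha(\psi)=\alpha$, and the first isomorphism theorem gives $\Aut(\Gamma_0)/\Stab(\Gamma_0)\cong\Aut(T,\beta)$.

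I expect the vanishing of $\operatorname{Ext}^1_\ZZ(T,\FF^\times)$ (equivalently, that the commutator pairing identifies $H^2(T,\FF^\times)$ with the group of alternating bicharacters) to be the one nontrivial input; everything else is bookkeeping with the one-dimensional components and the cocycle $\sigma$. An alternative to this cohomological step would be to construct $\lambda$ by hand on the symplectic generators $a_i,b_i$ of the decomposition \eqref{decomp_T} and extend multiplicatively, but the divisibility argument is cleaner and avoids the case analysis forced by how $\alpha$ may mix the cyclic factors.
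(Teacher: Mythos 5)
Your proof is correct, and while your treatment of $\Stab(\Gamma_0)$ matches the paper's (both identify it with $\wh{T}\cong\Hom(T,\FF^\times)$ and use nondegeneracy of $\beta$ together with finiteness of $T$ to see that $t\mapsto\beta(t,\cdot)$ is onto), your argument for surjectivity of $\Aut(\Gamma_0)\to\Aut(T,\beta)$ is genuinely different. The paper fixes the symplectic generators $a_i,b_i$ of \eqref{decomp_T}, observes that $X_{a_i}^{\ell_i}=1$, $X_{b_i}^{\ell_i}=1$ and the commutation relations \eqref{commutation_relations} are defining relations for $\cD$ (by a dimension count on the free algebra modulo these relations), and then, given $\pi\in\Aut(T,\beta)$, rescales $X_{\pi(a_i)}$ and $X_{\pi(b_i)}$ so that they satisfy the same relations, producing the required automorphism from the presentation. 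You instead work basis-free: writing $X_uX_v=\sigma(u,v)X_{uv}$, the obstruction to realizing $\alpha\in\Aut(T,\beta)$ is the class of the $2$-cocycle $\mu=\bigl(\sigma\circ(\alpha\times\alpha)\bigr)\cdot\sigma^{-1}$, which is symmetric precisely because $\alpha$ preserves $\beta$, and symmetric cocycles are coboundaries since $\operatorname{Ext}^1_{\ZZ}(T,\FF^\times)=0$ by divisibility of $\FF^\times$. Both routes use algebraic closedness in essentially the same way (extraction of roots of unity versus divisibility), but yours avoids choosing a symplectic basis and isolates the one cohomological input, while the paper's yields the useful byproduct that the generalized Pauli relations present $\cD$. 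One cosmetic slip: multiplicativity of $X_t\mapsto\lambda(t)X_{\alpha(t)}$ requires $\mu(u,v)=\lambda(uv)\lambda(u)^{-1}\lambda(v)^{-1}$, the inverse of the coboundary you wrote; since coboundaries form a group, this changes nothing.
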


\begin{proof}
The first assertion is proved in \cite{E10} and \cite{BK10}, but we include a proof for completeness. Any $\psi\in\Stab{\Gamma_0}$ must send $X_t$ to its scalar multiple, so we have a map $\lambda\colon T\to\FF^\times$ such that $\psi(X_t)=\lambda(t)X_t$ for all $t\in T$. One immediately verifies that $\lambda$ must be a group homomorphism, i.e., a character of $T$. It follows from  \eqref{commutation_relations} that the inner automorphism $X\mapsto X_tXX_t^{-1}$ corresponds to the character $\lambda(u)=\beta(t,u)$, $u\in T$. Since $\beta$ is nondegenerate, it establishes an isomorphism between $T$ and the group of characters $\wh{T}$.

Any $\psi\in\Aut{\Gamma_0}$ must send $X_t$ to a scalar multiple of some $X_u$. Hence we have a homomorphism $f\colon\Aut(\Gamma_0)\to\sg(T)$ such that $\psi(\FF X_t)=\FF X_{f(\psi)(t)}$ for all $t\in T$. Clearly, the kernel of $f$ is $\Stab(\Gamma_0)$. Fix $\psi\in\Aut(\Gamma_0)$ and let $\pi=f(\psi)$. Since $\psi(X_u X_v)\in\FF\psi(X_{uv})$ and $\psi(X_u)\psi(X_v)\in\FF X_{\pi(u)\pi(v)}$, we conclude that $\pi(uv)=\pi(u)\pi(v)$ for all $u,v\in T$, i.e., $\pi$ is an automorphism of $T$. Applying $\psi$ to both sides of \eqref{commutation_relations}, we obtain $\psi(X_u)\psi(X_v)=\beta(u,v)\psi(X_v)\psi(X_u)$. It follows that $\beta(\pi(u),\pi(v))=\beta(u,v)$, for all $u,v\in T$. We have proved that the image of $f$ is contained in $\Aut(T,\beta)$. Conversely, suppose we have $\pi\in\Aut(T,\beta)$. Observe that the algebra $\cD$ is generated by the elements $X_{a_i}$ and $X_{b_i}$, $i=1,\ldots,r$. They satisfy $X_{a_i}^{\ell_i}=1$, $X_{b_i}^{\ell_i}=1$, and the commutation relations given by \eqref{commutation_relations}. Clearly, the free algebra modulo these relations has dimension $\leq\ell_1\cdots\ell_r$. It follows that these relations are defining for the algebra $\cD$. Since $\pi(a_i)$ has the same order as $a_i$, we can choose a scalar multiple $X'_{\pi(a_i)}$ of $X_{\pi(a_i)}$ such that $(X'_{\pi(a_i)})^{\ell_i}=1$. Choose $X'_{\pi(b_i)}$ similarly. Since $\pi$ preserves $\beta$, the elements $X'_{\pi(a_i)}$ and $X'_{\pi(b_i)}$ will satisfy the same commutation relations as $X_{a_i}$ and $X_{b_i}$. It follows that there exists an automorphism $\psi$ of $\cD$ sending $X_{a_i}$ to $X'_{\pi(a_i)}$ and $X_{b_i}$ to $X'_{\pi(b_i)}$, for all $i=1,\ldots,r$. By construction, $f(\psi)$ coincides with $\pi$ on the elements $a_i$ and $b_i$, $i=1,\ldots,r$. Hence $f(\psi)=\pi$.
\end{proof}

Thus $\W(\Gamma_0)=\Aut(T,\beta)$. We are now ready to compute the Weyl group of any fine grading $\Gamma$ on $M_n(\FF)$.

\begin{theorem}\label{Weyl_fine_grad_matrix}
Let $\Gamma=\M(\cD,k)$ where $\cD$ is a matrix algebra with a division grading $\Gamma_0$ given by \eqref{division_grading}. Let $T$ be the support of $\Gamma_0$ and let $\beta$ be the nondegenerate alternating bicharacter $T\times T\to\FF^\times$ determined by \eqref{commutation_relations}. Then
\[
\W(\Gamma)\cong T^{k-1}\rtimes(\Aut(T,\beta)\times\sg(k))
\]
where $\Aut(T,\beta)$ and $\sg(k)$ act on $T^{k-1}$ through identifying the latter with $T^k/T$, where $T$ is imbedded into $T^k$ diagonally.
\end{theorem}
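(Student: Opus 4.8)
The plan is to make the induced action of $\Aut(\Gamma)$ on the universal group $U(\Gamma)=\wt{G}^0\cong\ZZ^{k-1}\times T$ completely explicit, and then read off $\W(\Gamma)$ as the image of the canonical homomorphism $u\colon\Aut(\Gamma)\to\Aut(U(\Gamma))$ whose kernel is $\Stab(\Gamma)$. By the discussion preceding the theorem, any $\psi\in\Aut(\Gamma)$ has the form $X\mapsto\Psi\,\psi_0(X)\,\Psi^{-1}$ with $\psi_0\in\Aut(\Gamma_0)$ acting entrywise and $\Psi=PD$ a monomial matrix, $P$ the permutation matrix of some $\pi\in\sg(k)$ and $D=\diag(d_1,\dots,d_k)$ with nonzero homogeneous $d_i\in\cD$. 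First I would compute $\psi(E_{ij}\ot d)=E_{\pi(i)\pi(j)}\ot d_i\,\psi_0(d)\,d_j^{-1}$ and take degrees. Writing $s_i=\deg d_i\in T$ and $\pi_0=f(\psi_0)\in\Aut(T,\beta)$ in the notation of Proposition \ref{aut_division}, this gives
\[
u(\psi)\colon\quad \wt{g}_i\,t\,\wt{g}_j^{-1}\ \longmapsto\ \wt{g}_{\pi(i)}\,s_i\,\pi_0(t)\,s_j^{-1}\,\wt{g}_{\pi(j)}^{-1}
\]
for all $i,j$ and all $t\in T$.

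Next I would check that every triple $(\pi,(s_i),\pi_0)\in\sg(k)\times T^k\times\Aut(T,\beta)$ actually arises: given $\pi_0$, Proposition \ref{aut_division} supplies a $\psi_0$ realizing it, and then taking $d_i=X_{s_i}$ together with the permutation matrix $P$ of $\pi$ produces a $\psi\in\Aut(\Gamma)$ with the prescribed data (the displayed formula shows this $\psi$ permutes homogeneous components, so it indeed lies in $\Aut(\Gamma)$). Hence $\W(\Gamma)=\im u$ is exactly the set of automorphisms displayed above. To organize these, I would extend each to an automorphism $\wh{A}=\wh{A}(\pi,(s_i),\pi_0)$ of the ambient group $\wt{G}=T\times\ZZ^k$ by $\wt{g}_i\mapsto\wt{g}_{\pi(i)}s_i$ and $t\mapsto\pi_0(t)$; these preserve $\wt{G}^0$ and restrict to the maps $u(\psi)$ above. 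A direct computation of $\wh{A}\circ\wh{A}'$ yields the composition law $\pi''=\pi\pi'$, $\pi_0''=\pi_0\pi_0'$, $s_i''=s_{\pi'(i)}\,\pi_0(s_i')$, which exhibits the group $\wh{G}$ of all $\wh{A}$ as the semidirect product $T^k\rtimes(\sg(k)\times\Aut(T,\beta))$, with $\sg(k)$ permuting the coordinates of $T^k$ and $\Aut(T,\beta)$ acting on each coordinate.

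Finally I would identify the kernel of the restriction homomorphism $\wh{G}\to\Aut(\wt{G}^0)$, $\wh{A}\mapsto\wh{A}|_{\wt{G}^0}=u(\psi)$. Setting the displayed formula equal to the identity and evaluating it on the elements $t\in T$ forces $\pi_0=\id$; evaluating on the free parts $\wt{g}_i\wt{g}_j^{-1}$ forces $\pi=\id$, using that $\sg(k)$ acts faithfully on $\ZZ^k/\ZZ(1,\dots,1)\cong\ZZ^{k-1}$; and the remaining generators then force $s_1=\dots=s_k$. Thus this kernel is precisely the diagonal copy $T\subdiag\cong T$ inside $N=T^k$. Since $T\subdiag$ is normalized by $\sg(k)\times\Aut(T,\beta)$, passing to the quotient gives
\[
\W(\Gamma)=\wh{G}/T\subdiag\cong\bigl(T^k/T\subdiag\bigr)\rtimes\bigl(\sg(k)\times\Aut(T,\beta)\bigr)\cong T^{k-1}\rtimes\bigl(\Aut(T,\beta)\times\sg(k)\bigr),
\]
with the asserted action.

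The hard part will be the bookkeeping in the first three steps: getting the degree formula right so that the off-diagonal entries $d_i$ enter as $s_i s_j^{-1}$, and deriving the twisted composition law $s_i''=s_{\pi'(i)}\pi_0(s_i')$, since it is this twist that simultaneously pins down the semidirect decomposition and the diagonal kernel. I would stress that uniqueness of the data $(\pi,(s_i),\pi_0)$ attached to a given $\psi$ is never needed: working throughout with the intrinsic homomorphism $u$ and its image sidesteps that issue entirely.
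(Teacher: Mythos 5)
Your proof is correct, and it reaches the theorem by a genuinely different (and arguably cleaner) organization than the paper's, although both rest on the same two inputs: the monomial-matrix description of $\Aut(\Gamma)$ coming from Proposition \ref{equivalence_graded_simple}, and Proposition \ref{aut_division} identifying $\Aut(\Gamma_0)/\Stab(\Gamma_0)$ with $\Aut(T,\beta)$. The paper stays inside $\Aut(\Gamma)$ throughout: it first computes $\Stab(\Gamma)\cong\bigl((\FF^\times)^k/\FF^\times\bigr)\times\Stab(\Gamma_0)$, splits off $\sg(k)$, presents the kernel $K$ of $\Aut(\Gamma)\to\sg(k)$ as a quotient of $(\wt{T}^k/\FF^\times)\rtimes\Aut(\Gamma_0)$ where $\wt{T}$ is the central extension of $T$ by $\FF^\times$ formed by the nonzero homogeneous elements of $\cD$, and then carries out the isomorphism theorems. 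You instead push everything through the degree map immediately, computing the image of $u\colon\Aut(\Gamma)\to\Aut(U(\Gamma))$ directly; since $\ker u=\Stab(\Gamma)$, this computes $\W(\Gamma)$ without ever touching $\FF^\times$, $\wt{T}$, or $\Stab(\Gamma)$ itself, and the only quotient left to take is by the diagonal copy of $T$, which you correctly identify as the kernel of restricting your automorphisms $\wh{A}$ of $\wt{G}$ to $\wt{G}^0$. Your twisted composition law $s_i''=s_{\pi'(i)}\pi_0(s_i')$ is right and does yield $\wh{G}\cong T^k\rtimes(\sg(k)\times\Aut(T,\beta))$ with the commuting actions claimed in the statement; moreover, your route delivers the paper's subsequent corollary (the explicit action of $\W(\Gamma)$ on $U(\Gamma)=\ZZ^k_0\times T$) essentially for free. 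What your route gives up is the by-product $\Stab(\Gamma)=\Diag(\Gamma)\cong(\FF^\times)^{k-1}\times T$, which the paper extracts from its proof and records in a remark. Two small points worth making explicit if you write this up: in the surjectivity step, for $i=j$ the component $\cR_t$ is $k$-dimensional, so one should note that the degree $s_i\pi_0(t)s_i^{-1}=\pi_0(t)$ of $\psi(E_{ii}\ot d)$ is independent of $i$; and the identification $\ker u=\Stab(\Gamma)$ uses that the support injects into $U(\Gamma)$, which is part of the universal-group discussion in the introduction.
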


\begin{proof}
Recall that any $\psi\in\Aut(\Gamma)$ is given by \eqref{automorphism_in_matrix_form} where $\Psi=PD$, $P$ is a permutation matrix, $D=\diag(d_1,\ldots,d_k)$, and $\psi_0\in\Aut(\Gamma_0)$. Hence 
\begin{equation}\label{psi_Eij_d}
\psi(E_{ij}\ot d)=E_{\pi(i)\pi(j)}\ot d_i\psi_0(d)d_j^{-1}.
\end{equation}

Assume for a moment that  $\psi\in\Stab(\Gamma)$. Since all $\wt{g}_i\wt{g}_j^{-1}$, $i\neq j$, are distinct modulo $T$, we see by substituting $d=1$ in \eqref{psi_Eij_d} that $P=I$ and hence $D=\diag(\lambda_1\wh{d},\ldots,\lambda_k\wh{d})$ for some $\lambda_i\in\FF^\times$ and nonzero homogeneous $\wh{d}\in\cD$. We may simultaneously replace $D$ by $\wh{D}=\diag(\lambda_1,\ldots,\lambda_k)$ and $\psi_0$ by $\wh{\psi}_0$ where $\wh{\psi}_0(d)=\wh{d}\psi_0(d)\wh{d}^{-1}$. Then taking $i=j$ in \eqref{psi_Eij_d}, we see that $\psi_0\in\Stab(\Gamma_0)$. Conversely, for $P=I$ and any $D=\diag(\lambda_1,\ldots,\lambda_k)$ and $\psi_0\in\Stab(\Gamma_0)$, equation \eqref{automorphism_in_matrix_form} yields $\psi\in\Stab(\Gamma)$. We have proved that $\Stab(\Gamma)$ is isomorphic to $\big((\FF^\times)^k/\FF^\times\big)\times\Stab(\Gamma_0)$. But by Proposition \ref{aut_division}, we know that $\Stab(\Gamma_0)$ is isomorphic to $T$ via $t\mapsto\Ad(X_t)$ where, for invertible $d\in\cD$, $\Ad(d)\colon\cD\to\cD$ is the inner automorphism $x\mapsto dxd^{-1}$.

For any $\psi_0\in\Aut(\Gamma_0)$, $\pi\in\sg(k)$, and nonzero homogeneous $d_1,\ldots,d_s$, equation \eqref{automorphism_in_matrix_form} defines a map $\psi\colon\cR\to\cR$ where $\Psi=PD$, $P$ is the permutation matrix corresponding to $\pi$ and $D=\diag(d_1,\ldots,d_k)$. Then for nonzero homogeneous $d$, the element $\psi(E_{ij}\ot d)$ is also homogeneous. Since the degree of $\psi(E_{ii}\ot d)$ does not depend on $i$ and, for $i\neq j$, the element $E_{ij}\ot d$ spans its homogeneous component, we conclude that $\psi$ is an equivalence. It follows that the homomorphism $\Aut(\Gamma)\to\sg(k)$ is onto and can be split by sending a permutation $\pi\in\sg(k)$ to the equivalence $\psi\in\Aut(\Gamma)$ corresponding to $\psi_0=\id$ and $D=I$.
Let $\wt{T}$ be the multiplicative group of nonzero homogeneous elements of $\cD$, which is a central extension of $T$ by $\FF^\times$.
The kernel $K$ of the homomorphism $\Aut(\Gamma)\to\sg(k)$ consists of the equivalences that correspond to $P=I$, so we have an epimorphism
\[
f\colon(\wt{T}^k/\FF^\times)\rtimes\Aut(\Gamma_0)\to K.
\]
The kernel $K_0$ of $f$ consists of the elements of the form $((d,\ldots,d)\FF^\times,\Ad(d^{-1}))$, $d\in\wt{T}$.

Let $N=A\times\Stab(\Gamma_0)$ where $A\subset\wt{T}^k/\FF^\times$ consists of the elements of the form $(\lambda_1 d,\ldots,\lambda_k d)\FF^\times$ where $d\in\wt{T}$ and $\lambda_i\in\FF^\times$. Then $N$ is a normal subgroup of $(\wt{T}^k/\FF^\times)\rtimes\Aut(\Gamma_0)$. Clearly, $N\supset K_0$, and $f$ maps $N$ onto $\Stab(\Gamma)$. It follows that
\begin{align*}
K/\Stab(\Gamma)&\cong((\wt{T}^k/\FF^\times)\rtimes\Aut(\Gamma_0))/(A\times\Stab(\Gamma_0))\\
&\cong((\wt{T}^k/\FF^\times)/A)\rtimes(\Aut(\Gamma_0)/\Stab(\Gamma_0)).
\end{align*}
Now $(\wt{T}^k/\FF^\times)/A\cong T^k/T$ where $T$ is imbedded into $T^k$ diagonally. Also, we have  $\Aut(\Gamma_0)/\Stab(\Gamma_0)\cong\Aut(T,\beta)$ by Proposition \ref{aut_division}.
Hence
\[
\Aut(\Gamma)/\Stab(\Gamma)\cong ((T^k/T)\rtimes\Aut(T,\beta))\rtimes\sg(k).
\]
It remains to observe that the actions of $\Aut(T,\beta)$ and of $\sg(k)$ on $T^k/T$ commute with each other.
\end{proof}

Recall that $U(\Gamma)$ is isomorphic to $\ZZ^{k-1}\times T$. To describe the action of $\W(\Gamma)$ on $U(\Gamma)$, it is convenient to realize them as follows:
\begin{align}
\W(\Gamma)&=(T^k/T)\rtimes(\Aut(T,\beta)\times\sg(k)),\label{Weyl_group_matrix} \\
U(\Gamma)&=\ZZ^k_0\times T,\label{univ_group_matrix}
\end{align}
where $\ZZ^k_0$ is the subgroup of $\ZZ^k$ consisting of all $\ul{x}=(x_1,\ldots,x_k)$ such that $\sum_i x_i=0$ (in other words, the elements $\wt{g}_i$ are identified with the standard basis in $\ZZ^k$).

\begin{corollary}[of the proof]
Writing $\W(\Gamma)$ as in \eqref{Weyl_group_matrix} and $U(\Gamma)$ as in \eqref{univ_group_matrix}, the action of $\W(\Gamma)$ on $U(\Gamma)$ is the following:
\begin{itemize}
\item $(t_1,\ldots,t_k)\in T^k$ acts by sending $(\ul{x},t)$ to $(\ul{x},t\prod_i t_i^{x_i})$, and this action of $T^k$ factors through $T^k/T$;
\item $\alpha\in\Aut(T,\beta)$ acts by sending $(\ul{x},t)$ to $(\ul{x},\alpha(t))$;
\item $\pi\in\sg(k)$ acts on $(\ul{x},t)$ by permuting the components of $\ul{x}$.\hfill{$\square$}
\end{itemize}
\end{corollary}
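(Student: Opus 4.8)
The plan is to read off the induced action $u(\psi)$ of each $\psi\in\Aut(\Gamma)$ on $U(\Gamma)$ directly from \eqref{psi_Eij_d}, and then to specialize it to the three families of generators of $\W(\Gamma)$ appearing in \eqref{Weyl_group_matrix}. The starting observation is that, for homogeneous $d\in\cD$ of $T$-degree $t$, the element $E_{ij}\ot d$ has degree $\wt{g}_i t\wt{g}_j^{-1}$, which in the realization \eqref{univ_group_matrix} is the element $(e_i-e_j,t)$ of $\ZZ^k_0\times T$, where $e_1,\ldots,e_k$ denotes the standard basis of $\ZZ^k$ (so $\wt{g}_i$ is identified with $e_i$). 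These elements, with $i\neq j$ and $t\in T$, generate $U(\Gamma)$, and by the commutative square defining $u(\psi)$ the automorphism $u(\psi)$ sends $\deg(E_{ij}\ot d)$ to $\deg\psi(E_{ij}\ot d)$; so it suffices to compute the latter.

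First I would treat a general $\psi$ with $\Psi=PD$, where $P$ is the permutation matrix of $\pi\in\sg(k)$, $D=\diag(d_1,\ldots,d_k)$, and $\psi_0\in\Aut(\Gamma_0)$ induces $\alpha=f(\psi_0)\in\Aut(T,\beta)$. Writing $s_i=\deg d_i\in T$ and using $\deg\psi_0(d)=\alpha(t)$ from Proposition \ref{aut_division}, formula \eqref{psi_Eij_d} gives
\[
\deg\psi(E_{ij}\ot d)=\bigl(e_{\pi(i)}-e_{\pi(j)},\ \alpha(t)\,s_is_j^{-1}\bigr),
\]
so that $u(\psi)$ sends $(e_i-e_j,t)$ to $(e_{\pi(i)}-e_{\pi(j)},\alpha(t)s_is_j^{-1})$. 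Under the identifications from the proof of Theorem \ref{Weyl_fine_grad_matrix} --- namely $D$ with $(s_1,\ldots,s_k)T\in T^k/T$, $\psi_0$ with $\alpha$, and $P$ with $\pi$ --- this is precisely $(\ul{x},t)\mapsto(\pi\cdot\ul{x},\alpha(t)\prod_\ell s_\ell^{x_\ell})$ evaluated on the generator $\ul{x}=e_i-e_j$, i.e. the composite of the three actions in the statement.

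Then I would specialize and extend. Setting $\pi=\id$, $\alpha=\id$, $s_i=t_i$ gives $(e_i-e_j,t)\mapsto(e_i-e_j,t\,t_it_j^{-1})$; since $\prod_\ell t_\ell^{x_\ell}=t_it_j^{-1}$ for $\ul{x}=e_i-e_j$ and $(\ul{x},t)\mapsto(\ul{x},t\prod_\ell t_\ell^{x_\ell})$ is a group endomorphism of $\ZZ^k_0\times T$, the two maps agree on a generating set and hence everywhere; moreover the diagonal $t_1=\cdots=t_k$ acts trivially because $\sum_\ell x_\ell=0$, so the action factors through $T^k/T$. Setting $\pi=\id$, $s_i=1$ gives $(\ul{x},t)\mapsto(\ul{x},\alpha(t))$, and setting $\alpha=\id$, $s_i=1$ gives $(\ul{x},t)\mapsto(\pi\cdot\ul{x},t)$, where $\pi$ permutes the coordinates of $\ul{x}$. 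These are exactly the three asserted formulas.

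There is no genuinely hard step once \eqref{psi_Eij_d} is available; the only thing to watch is the bookkeeping. I must match the abstract generators of $\W(\Gamma)$ in \eqref{Weyl_group_matrix} with the concrete data $(\pi,D,\psi_0)$ exactly as the isomorphism of Theorem \ref{Weyl_fine_grad_matrix} does --- in particular recalling that $D$ contributes to $T^k/T$ only after killing scalars and the diagonal copy of $T$ --- and I must confirm that each candidate formula, verified on the generating set $\{(e_i-e_j,t)\}$, extends to a well-defined automorphism of all of $U(\Gamma)$ by the homomorphism property. Everything else is a direct translation through the degree map.
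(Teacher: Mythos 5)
Your proposal is correct and follows exactly the route the paper intends: the corollary is extracted from \eqref{psi_Eij_d} together with the identifications $D\leftrightarrow(s_1,\ldots,s_k)T$, $\psi_0\leftrightarrow\alpha$, $P\leftrightarrow\pi$ established in the proof of Theorem \ref{Weyl_fine_grad_matrix}, with the verification on the generators $(e_i-e_j,t)$ of $\ZZ^k_0\times T$ and the observation that the diagonal copy of $T$ acts trivially because $\sum_\ell x_\ell=0$. The paper leaves these details implicit, and your write-up supplies them accurately.
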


\begin{remark}
We have also proved that $\Stab(\Gamma)=\Diag(\Gamma)\cong(\FF^\times)^{k-1}\times T$. This is a special case of \cite[Proposition 2.8]{BK10}, where $\Stab(\Gamma)$ is computed for any abelian group grading $\Gamma$ on $M_n(\FF)$.
\end{remark}

\begin{remark}
The group $\Aut(T,\beta)$ can be explicitly computed as follows. First, decompose $T$ into primary components: $T=\prod_i T_i$ where $T_i$ is a $q_i$-group, $q_i$ is a prime, $q_i\neq\chr{\FF}$. Then $T_i$ are $\beta$-orthogonal, so we have
\[
\Aut(T,\beta)=\prod_i\Aut(T_i,\beta_i)\quad\mbox{where}\quad\beta_i=\beta|_{T_i\times T_i}.
\]
So it is sufficient to consider the case when $T$ is a $q$-group. Then \eqref{decomp_T} yields
\[
T\cong\big((\ZZ/{q^{\alpha_1}}\ZZ)\times(\ZZ/{q^{\alpha_1}}\ZZ)\big)^{m_1}\times\cdots\times\big((\ZZ/{q^{\alpha_f}}\ZZ)\times(\ZZ/{q^{\alpha_f}}\ZZ)\big)^{m_f}
\]
where $\alpha_1<\ldots<\alpha_f$ and $m_i>0$. Let $m=m_1+\cdots+m_f$. Then $\Aut(T,\beta)$ can be identified with the group of $(2m\times 2m)$-matrices $A$ of the following form: $A$ is partitioned into blocks $A_{ij}$ of sizes $2m_i\times 2m_j$, $i,j=1,\ldots,f$, the entries of $A_{ij}$ are integers modulo $q^{\alpha_i}$, $A_{ij}\equiv 0\pmod{q^{\alpha_i-\alpha_j}}$ for all $i>j$, and ${}^tA J A\equiv J\pmod{q^{\alpha_f}}$ where ${}^tA$ is the transpose of $A$ and $J$ is the diagonal sum of blocks $q^{\alpha_f-\alpha_i}\begin{pmatrix}0&1\\-1&0\end{pmatrix}^{\oplus m_i}$, $i=1,\ldots,f$.
\end{remark}


\section{The algebra of octonions}\label{se:octonions}

In this section, the Weyl groups of the fine gradings on the algebra of octonions will be computed.
The \emph{Cayley algebra} $\cC$, or \emph{algebra of octonions}, over $\bF$ is the unique, up to isomorphism, eight-dimensional unital composition algebra (recall that we are assuming $\FF$ algebraically closed). There exists a nondegenerate quadratic form (the {\em norm}) $n\colon  \cC\rightarrow \bF$ such that $n(xy)=n(x)n(y)$ for all  $x,y\in\cC$. Here the norm being nondegenerate means that its polar form: $n(x,y)=n(x+y)-n(x)-n(y)$ is a nondegenerate symmetric bilinear form. Note that $n(x)=\frac12 n(x,x)$.

The next result summarizes some of the well-known properties of this algebra (see \cite[Chapter VIII]{KMRT} and \cite[Chapter 2]{ZSSS}):

\begin{proposition}\label{pr:Cayley}
Let $\cC$ be the Cayley algebra over $\bF$. Then:
\begin{enumerate}
\item[1)] Any $x\in\cC$ satisfies the degree $2$ Cayley--Hamilton equation:
\[
x^2-n(x,1)x+n(x)1=0.
\]

\item[2)] The map $x\mapsto \bar x=n(x,1)1-x$ is an involution, called the \emph{standard conjugation}, of $\cC$ and for any $x,y,z\in\cC$, we have $x\bar x=\bar xx=n(x)1$ and $n(xy,z)=n(y,\bar xz)=n(x,z\bar y)$.

\item[3)] There is a \emph{``good basis''} $\{e_1,e_2,u_1,u_2,u_3,v_1,v_2,v_3\}$ of $\cC$ consisting of iso\-tro\-pic elements, such that $n(e_1,e_2)=n(u_i,v_i)=1$ for any $i=1,2,3$ and $n(e_r,u_i)=n(e_r,v_i)=n(u_i,u_j)=n(u_i,v_j)=n(v_i,v_j)=0$ for any $r=1,2$ and $1\leq i\ne j\leq 3$, whose multiplication table is shown in Figure \ref{fig:Cayley}.
\begin{figure}
\[ 
\vbox{\offinterlineskip
\halign{\hfil$#$\enspace\hfil&#\vreglon
 &\hfil\enspace$#$\enspace\hfil
 &\hfil\enspace$#$\enspace\hfil&#\vregleta
 &\hfil\enspace$#$\enspace\hfil
 &\hfil\enspace$#$\enspace\hfil
 &\hfil\enspace$#$\enspace\hfil&#\vregleta
 &\hfil\enspace$#$\enspace\hfil
 &\hfil\enspace$#$\enspace\hfil
 &\hfil\enspace$#$\enspace\hfil&#\vreglon\cr
 &\omit\hfil\vrule width 1pt depth 4pt height 10pt
   &e_1&e_2&\omit&u_1&u_2&u_3&\omit&v_1&v_2&v_3&\omit\cr
 \noalign{\hreglon}
 e_1&&e_1&0&&u_1&u_2&u_3&&0&0&0&\cr
 e_2&&0&e_2&&0&0&0&&v_1&v_2&v_3&\cr
 &\multispan{11}{\hregletafill}\cr
 u_1&&0&u_1&&0&v_3&-v_2&&-e_1&0&0&\cr
 u_2&&0&u_2&&-v_3&0&v_1&&0&-e_1&0&\cr
 u_3&&0&u_3&&v_2&-v_1&0&&0&0&-e_1&\cr
 &\multispan{11}{\hregletafill}\cr
 v_1&&v_1&0&&-e_2&0&0&&0&u_3&-u_2&\cr
 v_2&&v_2&0&&0&-e_2&0&&-u_3&0&u_1&\cr
 v_3&&v_3&0&&0&0&-e_2&&u_2&-u_1&0&\cr
 &\multispan{12}{\hreglonfill}\cr}}
\]
\caption{Multiplication table of the Cayley algebra}\label{fig:Cayley}
\end{figure}
\end{enumerate}
\end{proposition}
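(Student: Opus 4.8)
All three assertions are standard structural facts about composition algebras, so the plan is to derive them from the two hypotheses that $n$ is multiplicative, $n(xy)=n(x)n(y)$, and nondegenerate. The single computational engine is the \emph{polarization} of multiplicativity. Replacing $y$ by $y+z$ in $n(xy)=n(x)n(y)$ and comparing the mixed terms gives $n(xy,xz)=n(x)n(y,z)$; the mirror substitution gives $n(xz,yz)=n(z)n(x,y)$; and linearizing the first of these in $x$ yields
\[
n(xy,wz)+n(wy,xz)=n(x,w)\,n(y,z).
\]
Everything in parts 1) and 2) will come from specializing these identities and then appealing to the nondegeneracy of $n$.

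For part 1), I would set $\bar x:=n(x,1)1-x$ and note that, since $x\bar x=n(x,1)x-x^2$, the Cayley--Hamilton identity is equivalent to $x\bar x=n(x)1$. To prove the latter I would compute $n(x^2,y)$: taking $w=1$, $y\mapsto x$, $z\mapsto y$ in the trilinear identity and using $n(x,xy)=n(x)n(1,y)$ (a case of the first polarized identity) gives $n(x^2,y)=n(x,1)n(x,y)-n(x)n(1,y)$. Hence $n\big(x^2-n(x,1)x+n(x)1,\,y\big)=0$ for every $y$, and nondegeneracy forces the bracketed element to vanish. This proves 1) and simultaneously the relation $x\bar x=n(x)1$; the symmetric computation using the right-handed polarized identity gives $\bar x x=n(x)1$.

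For the remaining claims of 2), the adjoint relation $n(xy,z)=n(y,\bar x z)$ is immediate from the trilinear identity with $w=1$, which reads $n(xy,z)+n(y,xz)=n(x,1)n(y,z)$, because $\bar x z=n(x,1)z-xz$; the equality $n(xy,z)=n(x,z\bar y)$ is the mirror computation starting from the right-handed identity. That $x\mapsto\bar x$ is an involution is clear from the definition, and that it is an anti-automorphism, $\overline{xy}=\bar y\,\bar x$, follows by checking $n(\overline{xy},z)=n(\bar y\,\bar x,z)$ for all $z$ via these adjoint relations and invoking nondegeneracy once more.

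The real work is part 3), which I would handle through the Peirce decomposition. Since $\FF$ is algebraically closed, the $8$-dimensional norm form is isotropic, and a standard argument upgrades an isotropic vector to an idempotent $e_1$ with $n(e_1)=0$ and $n(e_1,1)=1$; put $e_2:=1-e_1=\bar e_1$, so $n(e_1,e_2)=1$ and both are isotropic. Relative to the complementary idempotents $e_1,e_2$ the algebra decomposes as $\cC=\FF e_1\oplus\FF e_2\oplus U\oplus V$, where $U=e_1\cC e_2$ and $V=e_2\cC e_1$ are three-dimensional, and the Peirce multiplication rules give $e_1U=U=Ue_2$, $e_2V=V=Ve_1$ (the other one-sided products being zero) together with $UU\subseteq V$, $VV\subseteq U$, $UV\subseteq\FF e_1$ and $VU\subseteq\FF e_2$. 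The norm restricts to a perfect pairing between $U$ and $V$, and the product $U\times U\to V$ is an alternating map which, under the identification $V\cong U^{*}$, is a nonzero ``cross product''. Choosing a basis $u_1,u_2,u_3$ of $U$ adapted to this cross product and the $n$-dual basis $v_1,v_2,v_3$ of $V$ then reproduces exactly the entries of Figure~\ref{fig:Cayley}. The main obstacle is precisely this last bookkeeping: arranging the bases so that the alternating map takes the displayed normalized form and all signs come out consistent. I expect the cleanest route is to identify $\cC$ with Zorn's vector-matrix algebra over $\FF$, whose standard basis is a good basis, so that the entire table reduces to a direct multiplication of vector matrices.
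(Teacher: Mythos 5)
Your argument is essentially correct, but note that the paper does not prove this proposition at all: it is stated as a summary of well-known facts with a pointer to \cite[Chapter VIII]{KMRT} and \cite[Chapter 2]{ZSSS}. What you have written is the standard proof from those sources. Parts 1) and 2) are handled exactly as in the literature: polarizing $n(xy)=n(x)n(y)$ to get $n(xy,xz)=n(x)n(y,z)$ and the trilinear identity $n(xy,wz)+n(wy,xz)=n(x,w)n(y,z)$, then specializing and invoking nondegeneracy; your derivations of the Cayley--Hamilton identity, of $x\bar x=\bar xx=n(x)1$, and of the adjoint relations all check out (and indeed $\bar xx=n(x,1)x-x^2=x\bar x$ needs no separate mirror computation). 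For part 3), the Peirce decomposition relative to the complementary idempotents $e_1,e_2$, with $U$ and $V$ totally isotropic, dually paired by $n$, and carrying the alternating ``cross product'' $U\times U\to V\cong U^{*}$, is again the standard route. The only place your write-up is genuinely incomplete is the final sign bookkeeping for the table in Figure \ref{fig:Cayley}, which you openly defer to Zorn's vector-matrix algebra; that is a legitimate way to finish, since the paper already takes as given that the eight-dimensional unital composition algebra over the algebraically closed field $\FF$ is unique up to isomorphism, so exhibiting one model whose standard basis satisfies the stated norm conditions and multiplication table suffices.
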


The linear map $t(x)=n(x,1)$ is called the {\em trace}. A crucial step in the classification of fine gradings on $\cC$ \cite{E98} is the fact that, for any grading $\cC=\bigoplus_{g\in G}\cC_g$, we have $t(\cC_g \cC_h)=0$ unless $gh=e$.

\subsection{Fine gradings on the algebra of octonions}

A ``good basis'' of the Cayley algebra $\cC$ gives a $\ZZ^2$-grading with
\[
\begin{array}{lr}
\cC_{(0,0)}=\bF e_1\oplus \bF e_2,& \\
\cC_{(1,0)}=\bF u_1,& \cC_{(-1,0)}=\bF v_1,\\
\cC_{(0,1)}=\bF u_2,& \cC_{(0,-1)}=\bF v_2,\\
\cC_{(1,1)}=\bF v_3,& \cC_{(-1,-1)}=\bF u_3.
\end{array}
\]
This is called the \emph{Cartan grading} of the Cayley algebra. It is fine, and $\bZ^2$ is its universal group.

\smallskip

Let $\cQ$ be a proper four-dimensional subalgebra of the Cayley algebra $\cC$ such that $n\vert_\cQ$ is nondegenerate, and let $u$ be any element in $\cC\setminus\cQ$ with $n(u)=\alpha\ne 0$. Then $\cC=\cQ\oplus\cQ u$ and we get:
\[
\begin{split}
&n(a+bu)=n(a)+\alpha n(b),\\
&(a+bu)(c+du)=(ac-\alpha\bar db) + (da+b\bar c)u,
\end{split}
\]
for any $a,b,c,d\in\cQ$. Then $\cC$ is said to be obtained from $\cQ$ by means of the \emph{Cayley--Dickson doubling process} and we write $\cC=\CD(\cQ,\alpha)$. This gives a $\ZZ_2$-grading on $\cC$ with $\cC\subo=\cQ$ and $\cC\subuno=\cQ u$.

The subalgebra $\cQ$ above is a quaternion subalgebra which in turn can be obtained from a two-dimensional subalgebra $\cK$ through the same process $\cQ=\CD(\cK,\beta)=\cK\oplus\cK v$, and this gives a $\bZ_2$-grading of $\cQ$ and hence a $\bZ_2^2$-grading of $\cC=\cK\oplus\cK v\oplus\cK u\oplus(\cK v)u$. We write here $\cQ=\CD(\cK,\beta,\alpha)$.

If $\chr{\FF}\ne 2$, then $\cK$ can be obtained in turn from the ground field: $\cK=\CD(\bF,\gamma)$, and a $\bZ_2^3$-grading on $\cC$ appears. Here we write $\cC=\CD(\bF,\gamma,\beta,\alpha)$.

These gradings by $\bZ_2^r$, $r=1,2,3$, will be called \emph{gradings induced by the Cayley--Dickson doubling process}. The groups $\bZ_2^r$ are their universal groups.
Since there is a unique $d$-dimensional unital composition algebra for each $d=2,4,8$, these $\bZ_2^r$-gradings, $r=1,2,3$, are unique up to equivalence.

\smallskip

The classification of fine gradings on $\cC$ was obtained in \cite{E98}:

\begin{theorem}\label{th:grOctonions}
Let $\Gamma$ be a fine abelian group grading on the Cayley algebra $\cC$ over an algebraically closed field $\FF$.  Then $\Gamma$ is equivalent either to the Cartan grading or to the $\bZ_2^3$-grading induced by the Cayley--Dickson doubling process. The latter grading does not occur if $\chr{\FF}=2$.\hfill{$\square$}
\end{theorem}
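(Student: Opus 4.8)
The plan is to classify all fine abelian group gradings on the Cayley algebra $\cC$ by a structural argument that reduces everything to the behaviour of the homogeneous identity component $\cC_e$, and then to show that exactly two possibilities arise: a large identity component (forcing the Cartan grading) or a one-dimensional one (forcing the $\bZ_2^3$-grading). The key structural input, recalled just before the statement, is that $t(\cC_g\cC_h)=0$ unless $gh=e$; equivalently, the polar form $n(\cdot,\cdot)$ pairs $\cC_g$ with $\cC_{g^{-1}}$ nondegenerately, and distinct components $\cC_g,\cC_h$ with $gh\neq e$ are $n$-orthogonal. This immediately implies that $\dim\cC_g=\dim\cC_{g^{-1}}$ for all $g$ and that $\cC_e$ is a nondegenerate (hence unital, by the Cayley--Hamilton relation of Proposition~\ref{pr:Cayley}) graded subalgebra.

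\emph{First} I would analyse $\cC_e$. Since $1\in\cC_e$ and $n|_{\cC_e}$ is nondegenerate, $\cC_e$ is a nondegenerate unital subalgebra of $\cC$, hence a composition subalgebra, so $\dim\cC_e\in\{1,2,4,8\}$; the case $\dim\cC_e=8$ is excluded because the grading would be trivial and not fine. \emph{Next}, I would treat the three remaining cases. If $\dim\cC_e=4$, then $\cC_e=\cQ$ is a quaternion subalgebra and $\cC=\cQ\oplus\cQ u$ with the single nontrivial component $\cQ u$ carrying one nonidentity degree $g$ satisfying $g^2=e$; refining this forces a grading on $\cQ$, and one checks the only fine outcome compatible with $\cC_e=\cQ$ being the identity component is not fine (it refines), so this case yields no fine grading with $\dim\cC_e=4$. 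If $\dim\cC_e=2$, then $\cC_e=\cK$ is a two-dimensional étale subalgebra; the remaining six dimensions split into rank-one components paired by $n$, and the multiplicative structure (using the multiplication table and the constraint $t(\cC_g\cC_h)=0$ for $gh\neq e$) forces the grading to be equivalent to the Cartan grading, whose universal group is $\bZ^2$. If $\dim\cC_e=1$, i.e.\ $\cC_e=\bF1$, then every nonidentity component is isotropic and paired with its inverse, the support consists of involutions, and the pairing plus the composition structure forces $\supp\Gamma$ to generate an elementary abelian $2$-group; tracking the Cayley--Dickson tower $\cC=\CD(\bF,\gamma,\beta,\alpha)$ shows the grading is the $\bZ_2^3$-grading, which requires $\chr\FF\neq2$ for $\cK=\CD(\bF,\gamma)$ to be nondegenerate.

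\emph{The characteristic restriction} I would handle as follows: when $\chr\FF=2$, the identity element is isotropic ($n(1)=0$ since $n(1,1)=2n(1)=0$), so $\bF1$ cannot be a nondegenerate subalgebra, which obstructs the $\dim\cC_e=1$ branch and hence rules out the $\bZ_2^3$-grading; conversely the Cartan grading persists in all characteristics. \emph{Finally} I would confirm fineness and the universal groups: the Cartan grading is the eigenspace decomposition under a maximal quasitorus and cannot be refined, with universal group $\bZ^2$; the $\bZ_2^3$-grading has one-dimensional components and so is visibly fine with universal group $\bZ_2^3$.

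\emph{The main obstacle} I anticipate is the $\dim\cC_e=2$ case, where one must rule out intermediate gradings and show that the six isotropic one-dimensional pieces assemble exactly into the Cartan pattern of Figure~\ref{fig:Cayley}. The bookkeeping here — matching each rank-one component to a pair of good-basis vectors $u_i,v_i$ and verifying that the degrees are forced to be $(1,0),(0,1),(1,1)$ up to the symmetry of the support — is the delicate combinatorial heart of the argument; the cases $\dim\cC_e=1$ and $\dim\cC_e=4$ are comparatively mechanical once the $n$-pairing and the trace-vanishing constraint are in hand.
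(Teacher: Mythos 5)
The paper states this theorem without proof --- it is quoted from \cite{E98} and closed with a box --- so there is no internal proof to compare against. Your outline does reproduce the skeleton of the argument in that reference: trace-orthogonality of components forces the polar form $n$ to pair $\cC_g$ with $\cC_{g^{-1}}$ and to be nondegenerate on the unital subalgebra $\cC_e$, so $\cC_e$ is a composition subalgebra of dimension $1$, $2$ or $4$, and one argues case by case; the characteristic-$2$ obstruction (the polar form vanishes identically on $\FF 1$, so $\dim\cC_e=1$ cannot occur) is also the right one. However, two of your three cases contain genuine gaps.

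In the case $\dim\cC_e=4$ you assert that $\cQ^{\perp}=\cQ u$ is a single homogeneous component of degree $g$ with $g^2=e$. That is false: taking $\cQ=\espan{e_1,e_2,u_1,v_1}$, the complement splits as $N\oplus N'$ with $N=\FF u_2\oplus\FF v_3$ and $N'=\FF v_2\oplus\FF u_3$, and Figure \ref{fig:Cayley} gives $N^2=N'^2=0$, $\cQ N+N\cQ\subset N$, $NN'+N'N\subset\cQ$, so $\cC=\cQ\oplus N\oplus N'$ is a $\ZZ$-grading whose nonidentity degrees have infinite order. (It happens to be a coarsening of the Cartan grading, so the conclusion survives, but your argument does not cover this configuration.) In the case $\dim\cC_e=2$, which you rightly call the heart of the proof, nothing is actually proved: ``the multiplicative structure forces the Cartan grading'' is exactly the claim at issue. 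The missing tool is the Peirce decomposition relative to the idempotents $e_1,e_2$ spanning $\cC_e$: each $\cC_g$ is a $\cC_e$-module, so the eigenspaces $U=\espan{u_1,u_2,u_3}$ and $V=\espan{v_1,v_2,v_3}$ are graded, totally isotropic and dually paired by $n$, with $U\times U\to V$ the cross product; fineness then forces $U$ to split into three lines of degrees $g_1,g_2,g_3$ with $g_1g_2g_3=e$ and no further relations in the universal group, which is precisely the Cartan grading. Two smaller points: in the case $\dim\cC_e=1$ the claim that the support consists of involutions needs an argument (if $g^2\neq e$ then $\cC_g$ is totally isotropic, and $xy+yx=-n(x,y)1$ for trace-zero $x,y$ combined with $n(xy)=n(x)n(y)=0$ forces the pairing of $\cC_g$ with $\cC_{g^{-1}}$ to vanish, a contradiction); and fineness of the Cartan grading should not be deduced from ``maximal quasitorus'' (a maximal torus need not be a maximal quasitorus, and the paper invokes the torus--grading dictionary only in characteristic $0$) --- instead observe that any refinement would have to split $\cC_e=\FF e_1\oplus\FF e_2$, forcing $\dim\cC_e=1$ and hence anisotropic one-dimensional components, which contradicts $n(u_i)=0$.
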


\subsection{Cartan grading}

Let $S$ be the vector subspace spanned by $(1,1,1)$ in $\bR^3$ and consider the two-dimensional real vector space $E=\bR^3/S$. Take the elements
\[
\epsilon_1=(1,0,0)+S,\ \epsilon_2=(0,1,0)+S,\ \epsilon_3=(0,0,1)+S.
\]
The subgroup $G=\bZ\epsilon_1+\bZ\epsilon_2+\bZ\epsilon_3$ is isomorphic to $\bZ^2$, and we may think of the Cartan grading $\Gamma$ of the Cayley algebra $\cC$ as the grading in which
\[
\begin{split}
\degree(e_1)&=0=\degree(e_2),\\
\degree(u_i)&=\epsilon_i=-\degree(v_i),\; i=1,2,3.
\end{split}
\]
Then $\Supp\Gamma=\{0\}\cup\{\pm\epsilon_i\;|\; i=1,2,3\}$. The set
\[
\Phi\bydef \Bigl(\Supp\Gamma\cup\{\alpha+\beta\;|\;\alpha,\beta\in\Supp\Gamma, \alpha\ne \pm\beta\}\Bigr)\setminus\{0\}
\]
is the root system of type $G_2$ (although presented in a slightly different way from \cite[Chapter VI.4.13]{Bourbaki}).

Identifying the Weyl group $\W(\Gamma)$ with a subgroup of $\Aut(G)$, and this with a subgroup of $GL(E)$, we have:
\[
\begin{split}
\W(\Gamma)&\subset \{\mu\in \Aut(G)\;|\; \mu(\Supp\Gamma)=\Supp\Gamma\}\\
          &\subset \{\mu\in GL(E)\;|\; \mu(\Phi)=\Phi\}\defby \Aut\Phi.
\end{split}
\]
The latter group is the automorphism group of the root system $\Phi$, which coincides with its Weyl group.

If $\chr{\FF}\ne 2,3$, then we can work with the Lie algebra $\Der(\cC)$ and prove the next theorem using known results on the simple Lie algebra of type $G_2$ in \cite{Seligman}. The proof below works directly with the Cartan grading on the Cayley algebra and is valid in any characteristic.

\begin{theorem}\label{th:WeylCartanCayley}
Let $\Gamma$ be the Cartan grading on the Cayley algebra over an algebraically closed field. Identify $\Supp\Gamma\setminus\{0\}$ with the short roots in the root system $\Phi$ of type $G_2$. Then $\W(\Gamma)=\Aut\Phi$.
\end{theorem}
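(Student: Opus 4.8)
The inclusion $\W(\Gamma)\subseteq\Aut\Phi$ is already established in the discussion preceding the statement, so the plan is to prove the reverse inclusion $\Aut\Phi\subseteq\W(\Gamma)$ by exhibiting, for a set of generators of $\Aut\Phi$, explicit self-equivalences of the graded algebra $(\cC,\Gamma)$ that induce them. First I would pin down the structure of $\Aut\Phi$. As noted above, since $\Phi$ is of type $G_2$ the group $\Aut\Phi$ coincides with the Weyl group $W(G_2)$, the dihedral group of order $12$. Because $\epsilon_1+\epsilon_2+\epsilon_3=0$ in $E$, the symmetric group $\sg(3)$ acts faithfully on $E$ by permuting $\epsilon_1,\epsilon_2,\epsilon_3$, and this action commutes with $-\id$. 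As $\sg(3)\times\{\pm\id\}$ already has order $12$ and preserves $\Phi$, we get $\Aut\Phi=\sg(3)\times\{\pm\id\}$. It therefore suffices to realize three generators: a $3$-cycle and a transposition in $\sg(3)$, together with the central involution $-\id$.

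Next I would write down candidate maps of $\cC$ in terms of the good basis of Proposition~\ref{pr:Cayley}(3). For the $3$-cycle take $\sigma_{\mathrm{cyc}}\colon e_r\mapsto e_r$, $u_i\mapsto u_{i+1}$, $v_i\mapsto v_{i+1}$ (indices mod $3$); for the transposition take $\sigma\colon e_r\mapsto e_r$, $u_1\leftrightarrow u_2$, $v_1\leftrightarrow v_2$, $u_3\mapsto -u_3$, $v_3\mapsto -v_3$; and for $-\id$ take $\tau\colon e_1\leftrightarrow e_2$, $u_i\leftrightarrow v_i$. Each is linear and permutes the good basis up to sign, hence maps homogeneous components to homogeneous components: $\sigma_{\mathrm{cyc}}$ and $\sigma$ induce on $\Supp\Gamma$ the corresponding permutations of $\{\epsilon_1,\epsilon_2,\epsilon_3\}$ (the signs are invisible at the level of degrees), while $\tau$ induces $\epsilon_i\mapsto-\epsilon_i$.

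The substantive step is to check that $\sigma_{\mathrm{cyc}}$, $\sigma$ and $\tau$ are algebra automorphisms, and this is where the only real work lies: one verifies $\varphi(xy)=\varphi(x)\varphi(y)$ on all pairs of basis vectors against the multiplication table of Figure~\ref{fig:Cayley}. The main obstacle, such as it is, is the careful bookkeeping of the signs in that table --- for instance $u_1u_2=v_3$ but $u_2u_1=-v_3$, and $u_iv_i=-e_1$ --- which is exactly what dictates the sign choices for $\sigma$: the relation $\sigma(u_1u_2)=\sigma(u_1)\sigma(u_2)=u_2u_1=-v_3$ forces $\sigma(v_3)=-v_3$, and similarly for $u_3$. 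Once these choices are fixed the verification is a finite mechanical check; for example $\sigma_{\mathrm{cyc}}(u_1u_2)=\sigma_{\mathrm{cyc}}(v_3)=v_1=u_2u_3=\sigma_{\mathrm{cyc}}(u_1)\sigma_{\mathrm{cyc}}(u_2)$ and $\tau(u_1u_2)=\tau(v_3)=u_3=v_1v_2=\tau(u_1)\tau(u_2)$.

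Finally, having verified that $\sigma_{\mathrm{cyc}},\sigma,\tau\in\Aut(\cC)$ permute the components of $\Gamma$, they lie in $\Aut(\Gamma)$, and their images in $\W(\Gamma)\subseteq\Aut(G)$ are precisely a $3$-cycle, a transposition and $-\id$. These generate $\sg(3)\times\{\pm\id\}=\Aut\Phi$, giving $\Aut\Phi\subseteq\W(\Gamma)$. Combined with the reverse inclusion already in hand, this yields $\W(\Gamma)=\Aut\Phi$. Since the argument uses only the good basis and never invokes $\Der(\cC)$ or any restriction on the characteristic, it is valid in all characteristics, as claimed.
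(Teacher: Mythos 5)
Your proposal is correct and follows essentially the same route as the paper: the paper likewise exhibits the cyclic automorphism $u_i\mapsto u_{i+1}$, $v_i\mapsto v_{i+1}$, the involution $e_1\leftrightarrow e_2$, $u_i\leftrightarrow v_i$ inducing $-\id$, and a signed transposition of the $u_i$'s, and concludes by an order count that these generate all of the dihedral group $\Aut\Phi$ of order $12$. Your only cosmetic differences are using the transposition $(12)$ instead of $(23)$ and naming the decomposition $\Aut\Phi=\sg(3)\times\{\pm\id\}$ explicitly rather than just counting $4\times 3=12$.
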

\begin{proof}
The group $\Aut\Phi$ is the dihedral group of order $12$. Now the order $3$ automorphism of $\cC$:
\begin{equation}\label{eq:tauC}
\tau\colon e_j\mapsto e_j,\ u_i\mapsto u_{i+1},\ v_i\mapsto v_{i+1},
\end{equation}
for $j=1,2$ and $i=1,2,3$ (modulo $3$), belongs to $\Aut(\Gamma)$, and its projection into $\W(\Gamma)$ permutes cyclically the $\epsilon_i$'s. Also, the order two automorphisms:
\[
\begin{split}
\varphi_1&\colon e_1\leftrightarrow e_2,\ u_i\leftrightarrow v_i\ (i=1,2,3),\\
\varphi_2&\colon e_j\mapsto e_j\,(j=1,2),\ u_1\mapsto -u_1,\ u_2\leftrightarrow u_3,\ v_1\mapsto -v_1,\ v_2\leftrightarrow v_3,
\end{split}
\]
belong to $\Aut(\Gamma)$, and their projections into $\W(\Gamma)$ generate a subgroup of order $4$. Therefore, the order of $\W(\Gamma)$ is at least $4\times 3=12$, and hence $\W(\Gamma)$ is the whole $\Aut\Phi$.
\end{proof}

\begin{remark}
We have $\Stab(\Gamma)=\Diag(\Gamma)$. It is a maximal torus in the algebraic group $\Aut(\cC)$.
\end{remark}

\subsection{$\ZZ_2^3$-grading}

Recall that this grading occurs only if $\chr{\FF}\ne 2$. We fix the following notation: let
\[
c_1=(\bar 1,\bar 0,\bar 0),\ c_2=(\bar 0,\bar 1,\bar 0),\ c_3=(\bar 0,\bar 0,\bar 1)
\]
be the standard basis of $\ZZ_2^3$. The Cayley algebra $\cC$ is obtained by repeated application of the Cayley--Dickson doubling process:
\[
\calK=\bF\oplus\bF w_1,\quad \calQ=\calK\oplus\calK w_2,\quad \calC=\calQ\oplus\calQ w_3,
\]
with $w_i^2=1$ for $i=1,2,3$. (One may take $w_1=e_1-e_2$, $w_2=u_1-v_1$ and $w_3=u_2-v_2$.) Setting
\begin{equation}\label{CD_grading}
\deg w_i=c_i,\; i=1,2,3,
\end{equation}
gives the $\bZ_2^3$-grading $\Gamma$ induced by the Cayley--Dickson doubling process. 

\begin{theorem}\label{th:Z23Weyl}
Let $\Gamma$ be the $\ZZ_2^3$-grading on the Cayley algebra as in \eqref{CD_grading} over an algebraically closed field of characteristic different from $2$. 
Then $\W(\Gamma)=\Aut(\bZ_2^3)\cong GL_3(2)$.
\end{theorem}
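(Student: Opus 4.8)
The plan is to use the inclusion $\W(\Gamma)\subseteq\Aut(U(\Gamma))=\Aut(\bZ_2^3)=GL_3(2)$ coming from the action on the universal group, and then to prove equality by realizing \emph{every} $\pi\in\Aut(\bZ_2^3)$ as the map induced on the support by some graded automorphism of $\cC$. Since $|GL_3(2)|=168$, it would also suffice to produce enough automorphisms to generate the whole group, but exhibiting one automorphism per $\pi$ is cleaner and avoids a separate verification that a chosen set generates $GL_3(2)$.

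First I would record the structure of the homogeneous components. Each $\cC_g$ is one-dimensional; fix $0\neq x_g\in\cC_g$, with $x_0=1$. For $g\neq 0$ the recalled identity $t(\cC_g\cC_h)=0$ for $g+h\neq 0$ gives (taking $h=0$) $t(x_g)=0$, so $\bar x_g=-x_g$ and $x_g^2=-n(x_g)1$ with $n(x_g)\neq 0$ by Proposition \ref{pr:Cayley}; thus each $x_g$ ($g\neq 0$) is a trace-zero invertible element. Moreover, for distinct nonzero $g,h$ the polar form pairs degree $g$ only with degree $-g=g$, so $n(x_g,x_h)=0$; since $xy+yx=-n(x,y)1$ for trace-zero $x,y$, distinct nonzero-degree homogeneous elements are orthogonal and anticommute. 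Finally, with the normalization \eqref{CD_grading} one has $w_i^2=1$, i.e. $\cC=\CD(\bF,-1,-1,-1)$.

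Now fix $\pi\in\Aut(\bZ_2^3)$. Since $\bF$ is algebraically closed I can choose scalars $\mu_i$ so that $f_i:=\mu_i x_{\pi(c_i)}$ satisfies $f_i^2=1$ for $i=1,2,3$. I claim $(f_1,f_2,f_3)$ is a Cayley--Dickson generating triple: each $f_i$ is a trace-zero unit with $f_i^2=1$; they are pairwise orthogonal (hence anticommuting) because $\pi(c_1),\pi(c_2),\pi(c_3)$ are pairwise distinct and nonzero; and $f_3$ is orthogonal to the quaternion subalgebra $\langle 1,f_1,f_2,f_1f_2\rangle$. The only point that is not immediate is $f_3\perp f_1f_2$: but $f_1f_2$ is homogeneous of degree $\pi(c_1)+\pi(c_2)$, and $\pi(c_1)+\pi(c_2)\neq\pi(c_3)$ precisely because $\pi$ is invertible, so $\pi(c_3)\notin\langle\pi(c_1),\pi(c_2)\rangle$. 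This is the crux of the argument: orthogonality of distinct-degree homogeneous elements turns every Cayley--Dickson compatibility condition into the single statement that $\pi$ carries the basis $c_1,c_2,c_3$ to a basis.

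It remains to extend $w_i\mapsto f_i$ to an automorphism. Because $1,f_1,f_2,f_1f_2$ are orthogonal with nonzero norms, they span a nondegenerate quaternion subalgebra $\cQ'$, and since $f_3\perp\cQ'$ with $n(f_3)=-1\neq 0$ we get $\cC=\cQ'\oplus\cQ'f_3=\CD(\cQ',n(f_3))$; with $f_i^2=1$ this realizes $\cC$ again as $\CD(\bF,-1,-1,-1)$. Hence the canonical Cayley--Dickson isomorphism attached to $(f_1,f_2,f_3)$, composed with the inverse of the one attached to $(w_1,w_2,w_3)$, is an automorphism $\varphi$ of $\cC$ with $\varphi(w_i)=f_i$ (one may equivalently invoke the uniqueness of the eight-dimensional composition algebra recorded before Proposition \ref{pr:Cayley} for the existence of this map). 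Since $\varphi$ is multiplicative and $c_1,c_2,c_3$ generate $\bZ_2^3$, it follows that $\varphi(\cC_g)=\cC_{\pi(g)}$ for all $g$, so $\varphi\in\Aut(\Gamma)$ induces $\pi$ on $\Supp\Gamma$ and $\pi\in\W(\Gamma)$. As $\pi$ was arbitrary, $\W(\Gamma)=\Aut(\bZ_2^3)\cong GL_3(2)$. I expect the main (though still routine) obstacle to be the bookkeeping in this last paragraph, namely checking that $w_i\mapsto f_i$ genuinely extends to a homomorphism; this is supplied by the standard coordinatization of octonion algebras via orthogonal quaternion subalgebras rather than by any new computation.
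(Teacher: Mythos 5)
Your proposal is correct and follows essentially the same route as the paper: given $\pi\in\Aut(\bZ_2^3)$, choose normalized homogeneous elements of square $1$ in the components $\cC_{\pi(c_i)}$, observe that they generate $\cC$ by the Cayley--Dickson doubling process, and use the resulting automorphism $w_i\mapsto f_i$ to realize $\pi$ in $\W(\Gamma)$. The paper states the doubling-tower step without proof, whereas you supply the orthogonality and anticommutation verifications explicitly; this is just added detail, not a different argument.
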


\begin{proof}
Given any $\mu\in\Aut(\bZ_2^3)$, take $\wt{w}_i\in\cC_{\mu(c_i)}$ with $\wt{w}_i^2=1$. Then $\cC$ is obtained by repeated application of the Cayley--Dickson doubling process:
\[
\wt{\calK}=\bF\oplus\bF \wt{w}_1,\quad \wt{\calQ}=\wt{\calK}\oplus\wt{\calK} \wt{w}_2,\quad \calC=\wt{\calQ}\oplus\wt{\calQ} \wt{w}_3,
\]
and hence there is a unique automorphism $\varphi\in\Aut\cC$ such that $\varphi(w_i)=\wt{w}_i$ for $i=1,2,3$. Then $\varphi$ belongs to $\Aut(\Gamma)$, and its projection into $\W(\Gamma)$ is precisely $\mu$. This shows that $\W(\Gamma)$ fills the whole $\Aut(\bZ_2^3)$.
\end{proof}

\begin{remark} As any $\varphi\in\Stab(\Gamma)$ multiplies each $w_i$, $i=1,2,3$, by either $1$ or $-1$, we see that $\Stab(\Gamma)=\Diag(\Gamma)$ is isomorphic to $\bZ_2^3$. Therefore, the group $\Aut(\Gamma)$ is a (non-split) extension of $\bZ_2^3$ by $\W(\Gamma)\cong GL_3(2)$. This group $\Aut(\Gamma)$, in its irreducible seven-dimensional representation given by the trace zero elements in $\calC$, is precisely the group used in \cite{Wilson} to give a nice construction of the compact real form of the Lie algebra of type $G_2$.
\end{remark}


\section{The Albert algebra}\label{se:Albert}

In this section, the Weyl groups of the fine gradings on the Albert algebra will be computed. We assume $\chr{\FF}\ne 2$ throughout this section.

Let $\calC$ be the Cayley algebra. The \emph{Albert algebra} is the algebra of Hermitian $3\times 3$-matrices over $\calC$:
\begin{equation*}
\begin{split}
\calA=\sym_3(\calC,*)&=\left\{\begin{pmatrix} \alpha_1&\bar a_3&a_2\\ a_3&\alpha_2&\bar a_1\\ \bar a_2&a_1&\alpha_3\end{pmatrix}\;|\; 
\alpha_1,\alpha_2,\alpha_3\in \bF,\; a_1,a_2,a_3\in \calC\right\} \\[6pt]
&=\bF E_1\oplus\bF E_2\oplus\bF E_3\oplus \iota_1(\calC)\oplus\iota_2(\calC)\oplus\iota_3(\calC),
\end{split}
\end{equation*}
where
\[
\begin{aligned}
E_1&=\begin{pmatrix}1&0&0\\ 0&0&0\\ 0&0&0\end{pmatrix}, &
E_2&=\begin{pmatrix}0&0&0\\ 0&1&0\\ 0&0&0\end{pmatrix}, &
E_3&=\begin{pmatrix}1&0&0\\ 0&0&0\\ 0&0&1\end{pmatrix}, \\
\iota_1(a)&=2\begin{pmatrix}0&0&0\\ 0&0&\bar a\\ 0&a&0\end{pmatrix},\quad &
\iota_2(a)&=2\begin{pmatrix}0&0&a\\ 0&0&0\\ \bar a&0&0\end{pmatrix},\quad &
\iota_3(a)&=2\begin{pmatrix}0&\bar a&0\\a&0&0\\ 0&0&0\end{pmatrix},\quad
\end{aligned}
\]
with (commutative) multiplication given by $X Y=\frac{1}{2}(X\cdot Y+Y\cdot X)$, where $X\cdot Y$ denotes the usual product of matrices $X$ and $Y$. Then $E_i$ are orthogonal idempotents with $E_1+E_2+E_3=1$. The rest of the products are as follows:
\begin{equation}\label{eq:Albertproduct}
\begin{split}
&E_i\iota_i(a)=0,\quad E_{i+1}\iota_i(a)=\frac{1}{2}\iota_i(a)=E_{i+2}\iota_i(a),\\
&\iota_i(a)\iota_{i+1}(b)=\iota_{i+2}(\bar a\bar b),\quad
\iota_i(a)\iota_i(b)=2n(a,b)(E_{i+1}+E_{i+2}),
\end{split}
\end{equation}
for any $a,b\in \calC$, with $i=1,2,3$ taken modulo $3$. (This convention about indices will be used without further mention.)

\smallskip

For the main properties of the Albert algebra the reader may consult \cite{Jacobson}. It is the only exceptional simple Jordan algebra over $\bF$.
Any $X\in\cA$ satisfies the degree $3$ Cayley--Hamilton equation:
\[
X^3-T(X)X^2+S(X)X-N(X)1=0,
\]
where the linear form $T$ is called the {\em trace} and the cubic form $N$ is called the {\em norm} of the Albert algebra. A crucial step in the classification of fine gradings on $\cA$ \cite{Albert} is the fact that, for any grading $\cA=\bigoplus_{g\in G}\cA_g$, we have $T(\cA_g \cA_h)=0$ unless $gh=e$. We also note that any automorphism $\vphi$ of $\cC$ extends to $\cA$ by setting $\varphi(E_i)=E_i$, $\varphi(\iota_i(x))=\iota_i(\varphi(x))$, for all $x\in\cC$ and $i=1,2,3$.

\subsection{Fine gradings on the Albert algebra}

First we describe the gradings in question as they are presented in \cite{Albert}.

\smallskip

\noindent\ul{Cartan grading}:\quad
Consider the following elements in $\ZZ^4=\ZZ^2\times\ZZ^2$:
\[
\begin{aligned}
a_1&=(1,0,0,0),\quad& a_2&=(0,1,0,0),\quad&a_3&=(-1,-1,0,0),\\
g_1&=(0,0,1,0),&g_2&=(0,0,0,1),&g_3&=(0,0,-1,-1).
\end{aligned}
\]
Then $a_1+a_2+a_3=0=g_1+g_2+g_3$. Take a ``good basis'' of the Cayley algebra $\calC$. Recall that the assignment
\[
\degree e_1=\degree e_2=0,\quad \degree u_i=g_i=-\degree v_i
\]
gives the Cartan grading on $\calC$.

Now, the assignment
\[
\begin{split}
&\degree E_i=0,\\
&\degree\iota_i(e_1)=a_i=-\degree\iota_i(e_2),\\
&\degree\iota_i(u_i)=g_i=-\degree\iota_i(v_i),\\
&\degree\iota_i(u_{i+1})=a_{i+2}+g_{i+1}=-\degree\iota_i(v_{i+1}),\\
&\degree\iota_i(u_{i+2})=-a_{i+1}+g_{i+2}=-\degree\iota_i(v_{i+2}),
\end{split}
\]
where $i=1,2,3$, gives a $\ZZ^4$-grading on the Albert algebra $\calA$. (To see this, it suffices to look at the first component of $\ZZ^2\times\ZZ^2$, and by the cyclic symmetry of the product, it is enough to check that $\degree\bigl(\iota_3(\bar x\bar y)\bigr)=\degree\iota_1(x)+\degree\iota_2(y)$ for any $x,y$ in the ``good basis'' of $\cC$, and this is straightforward.)

This $\ZZ^4$-grading will be called the \emph{Cartan grading} on the Albert algebra. It is fine, and $\bZ^4$ is its universal group (see \cite{Albert}).

\smallskip

\noindent\ul{$\bZ_2^5$-grading}:\quad
Recall the $\ZZ_2^3$-grading \eqref{CD_grading} on the Cayley algebra $\calC$ induced by the Cayley--Dickson doubling process.
Then $\calA$ is obviously $\bZ_2^5$-graded as follows:
\[
\begin{split}
\degree E_i&=(\bar 0,\bar 0,\bar 0,\bar 0,\bar 0),\ i=1,2,3,\\
\degree\iota_1(x)&=(\bar 1,\bar 0,\degree x),\\
\degree\iota_2(x)&=(\bar 0,\bar 1,\degree x),\\
\degree\iota_3(x)&=(\bar 1,\bar 1,\degree x),
\end{split}
\]
for homogeneous elements $x\in\calC$. 

This grading will be referred to as the {\em $\bZ_2^5$-grading} on the Albert algebra. It is fine, and $\bZ_2^5$ is its universal group (see \cite{Albert}).

\smallskip

\noindent\ul{$\bZ\times\bZ_2^3$-grading}:\quad
Take an element $\bi\in\bF$ with $\bi^2=-1$ and consider the following elements in $\calA$:
\[
\begin{split}
E&=E_1,\\
\wt{E}&=1-E=E_2+E_3,\\
\nu(a)&=\bi\iota_1(a)\quad\mbox{for all}\quad a\in\calC_0,\\
\nu_{\pm}(x)&=\iota_2(x)\pm \bi\iota_3(\bar x)\quad\mbox{for all}\quad x\in \calC,\\
S^{\pm}&=E_3-E_2\pm\frac{\bi}{2}\iota_1(1),
\end{split}
\]
where $\cC_0=\{a\in\cC\;|\;t(a)=0\}$. The above elements span $\calA$, and \eqref{eq:Albertproduct} translates to:
\[
\begin{split}
&E\wt{E}=0,\quad ES^{\pm}=0,\quad E\nu(a)=0,\quad E\nu_{\pm}(x)=\frac{1}{2}\nu_{\pm}(x),\\
&\wt{E} S^{\pm}=S^{\pm},\quad \wt{E}\nu(a)=\nu(a),\quad \wt{E}\nu_{\pm}(x)=\frac{1}{2}\nu_{\pm}(x),\\
&S^+S^-=2\wt{E},\quad S^{\pm}\nu(a)=0,\quad S^{\pm}\nu_{\mp}(x)=\nu_{\pm}(x),\quad S^{\pm}\nu_{\pm}(x)=0,\\
&\nu(a)\nu(b)=-2n(a,b)\wt{E},\quad \nu(a)\nu_{\pm}(x)=\pm\nu_{\pm}(xa),\\
&\nu_{\pm}(x)\nu_{\pm}(y)=2n(x,y)S^{\pm},\quad
\nu_+(x)\nu_-(y)=2n(x,y)(2E+\wt{E})+\nu(\bar xy-\bar yx),
\end{split}
\]
for all $x,y\in\calC$ and $a,b\in \calC_0$.

There appears a $\ZZ$-grading on $\calA$:
\begin{equation*}
\calA=\calA_{-2}\oplus\calA_{-1}\oplus\calA_0\oplus\calA_1\oplus\calA_2,
\end{equation*}
with $\calA_{\pm 2}=\bF S^{\pm}$, $\calA_{\pm 1}=\nu_{\pm}(\calC)$, and $\calA_0=\bF E\oplus\Bigl(\bF \wt{E}\oplus \nu(\calC_0)\Bigr)$.
The $\bZ_2^3$-grading on $\calC$ combines with this $\bZ$-grading to give a $\bZ\times\bZ_2^3$-grading as follows:
\begin{equation}\label{eq:Z_Z23_grading}
\begin{split}
\degree S^\pm&=(\pm 2,\bar 0,\bar 0,\bar 0),\\
\degree\nu_{\pm}(x)&=(\pm 1,\degree x),\\
\degree E&=0=\degree\wt{E},\\
\degree\nu(a)&=(0,\degree a),
\end{split}
\end{equation}
for homogeneous elements $x\in\calC$ and $a\in\calC_0$.

This grading will be referred to as the {\em $\bZ\times\bZ_2^3$-grading} on the Albert algebra. It is fine, and $\bZ\times\bZ_2^3$ is its universal group (see \cite{Albert}).

\smallskip

\noindent\ul{$\bZ_3^3$-grading}:\quad
Consider the order $3$ automorphism $\tau$ of $\calC$ in \eqref{eq:tauC}, and the new multiplication defined on $\calC$ by:
\[
x*y\bydef\tau(\bar x)\tau^2(\bar y)\quad\mbox{for all}\quad x,y\in \calC.
\]
Then $n(x*y)=n(x)n(y)$ for all $x,y$, since $\tau$ preserves the norm. Moreover, for all $x,y,z\in \calC$, we have:
\[
\begin{split}
n(x*y,z)&=n(\tau(\bar x)\tau^2(\bar y),z)\\
    &=n(\tau(\bar x),z\tau^2(y))\\
    &=n(\bar x,\tau^2(z)\tau(y))\\
    &=n(x,\tau(\bar y)\tau^2(\bar z))\\
    &=n(x,y*z).
\end{split}
\]
Hence $(\calC,*,n)$ is a symmetric composition algebra (see \cite{ElduqueGrSym} or \cite[Chapter VIII]{KMRT}). Actually, $(\calC,*)$ is the Okubo algebra over $\bF$. Its multiplication table is shown in Figure \ref{fig:Okubo}.
\begin{figure}
\[
\vbox{\offinterlineskip
\halign{\hfil$#$\enspace\hfil&#\vreglon
 &\hfil\enspace$#$\enspace\hfil
 &\hfil\enspace$#$\enspace\hfil&#\vregleta
 &\hfil\enspace$#$\enspace\hfil
 &\hfil\enspace$#$\enspace\hfil&#\vregleta
 &\hfil\enspace$#$\enspace\hfil
 &\hfil\enspace$#$\enspace\hfil&#\vregleta
 &\hfil\enspace$#$\enspace\hfil
 &\hfil\enspace$#$\enspace\hfil&#\vreglon\cr
 &\omit\hfil\vrule width 1pt depth 4pt height 10pt
   &e_1&e_2&\omit&u_1&v_1&\omit&u_2&v_2&\omit&u_3&v_3&\omit\cr
 \noalign{\hreglon}
 e_1&&e_2&0&&0&-v_3&&0&-v_1&&0&-v_2&\cr
 e_2&&0&e_1&&-u_3&0&&-u_1&0&&-u_2&0&\cr
 &\multispan{12}{\hregletafill}\cr
 u_1&&-u_2&0&&v_1&0&&-v_3&0&&0&-e_1&\cr
 v_1&&0&-v_2&&0&u_1&&0&-u_3&&-e_2&0&\cr
 &\multispan{12}{\hregletafill}\cr
 u_2&&-u_3&0&&0&-e_1&&v_2&0&&-v_1&0&\cr
 v_2&&0&-v_3&&-e_2&0&&0&u_2&&0&-u_1&\cr
 &\multispan{12}{\hregletafill}\cr
 u_3&&-u_1&0&&-v_2&0&&0&-e_1&&v_3&0&\cr
 v_3&&0&-v_1&&0&-u_2&&-e_2&0&&0&u_3&\cr
 &\multispan{13}{\hreglonfill}\cr}}
\]
\caption{Multiplication table of the Okubo algebra}\label{fig:Okubo}
\end{figure}

This Okubo algebra is $\bZ_3^2$-graded by setting $\degree e_1=(\bar 1,\bar 0)$ and 
$\degree u_1=(\bar 0,\bar 1)$, 
with the degrees of the remaining elements being uniquely determined.

Assume now that $\chr{\FF}\ne 3$. Then this $\bZ_3^2$-grading is determined by two commuting order $3$ automorphisms $\varphi_1,\varphi_2\in\Aut(\calC,*)$:
\[
\varphi_1(e_1)=\omega e_1,\,\varphi_1(u_1)=u_1\quad\mbox{and}\quad\varphi_2(e_1)=e_1,\,\varphi_2(u_1)=\omega u_1,
\]
where $\omega$ is a primitive third root of unity in $\bF$.

Define $\tilde\iota_i(x)=\iota_i(\tau^i(x))$ for all $i=1,2,3$ and $x\in\calC$. Then \eqref{eq:Albertproduct} translates to:
\begin{equation*}
\begin{split}
&E_i\tilde\iota_i(x)=0,\quad E_{i+1}\tilde\iota_i(x)=\frac{1}{2}\tilde\iota_i(x)=E_{i+2}\tilde\iota_i(x),\\
&\tilde\iota_i(x)\tilde\iota_{i+1}(y)=\tilde\iota_{i+2}(x*y),\quad
\tilde\iota_i(x)\tilde\iota_i(y)=2n(x,y)(E_{i+1}+E_{i+2}),
\end{split}
\end{equation*}
for $i=1,2,3$ and $x,y\in\calC$.

The commuting order $3$ automorphisms $\varphi_1$, $\varphi_2$ of $(\calC,*)$ extend to commuting order $3$ automorphisms of $\calA$ (which will be denoted by the same symbols) as follows: 
\begin{equation}\label{eq:vphi_1_2}
\varphi_j(E_i)=E_i,\quad \varphi_j\bigl(\tilde\iota_i(x)\bigr)=\tilde\iota_i(\varphi_j(x))\quad\mbox{for all}\quad x\in\calC\quad\mbox{and}\quad i=1,2,3,  
\end{equation}
where $j=1,2$. On the other hand, the linear map $\varphi_3\colon\cA\to\cA$ defined by
\begin{equation}\label{eq:vphi_3}
\varphi_3(E_i)=E_{i+1},\quad \varphi_3\bigl(\tilde\iota_i(x)\bigr)=\tilde\iota_{i+1}(x)\quad\mbox{for all}\quad x\in\calC\quad\mbox{and}\quad i=1,2,3, 
\end{equation}
is another order $3$ automorphism, which commutes with $\varphi_1$ and $\varphi_2$. The subgroup of $\Aut\calA$ generated by $\varphi_1,\varphi_2,\varphi_3$ is isomorphic to $\bZ_3^3$ and induces a $\ZZ_3^3$-grading on $\calA$ whose components are all $1$-dimensional. This grading is obviously fine, and $\bZ_3^3$ is its universal group. The nonzero homogeneous elements are invertible (in the Jordan sense), and any three of them whose degrees are independent in $\ZZ_3^3$ generate the Albert algebra (see \cite[\S 7.3]{Albert}).

The (unique up to equivalence) grading induced by $\varphi_1,\varphi_2,\varphi_3$ will be referred to as the {\em $\bZ_3^3$-grading} on the Albert algebra.

\smallskip

The classification of fine gradings on the Albert algebra was obtained in \cite{DM_f4} for $\chr{\FF}=0$ and in \cite{Albert} for $\chr{\FF}\ne 2$.

\begin{theorem}\label{th:finegrAlbert}
Let $\Gamma$ be a fine abelian group grading on the Albert algebra $\cA$ over an algebraically closed field $\FF$ of characteristic different from $2$.  
Then $\Gamma$ is equivalent either to the Cartan grading, the $\bZ_2^5$-grading, the $\bZ\times\bZ_2^3$-grading or the $\bZ_3^3$-grading. The last grading does not occur if $\chr{\FF}=3$.\hfill{$\square$}
\end{theorem}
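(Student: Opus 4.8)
The plan is to classify $\Gamma$ through the idempotent structure of its neutral component $\cA_e$. The starting observation is that every homogeneous idempotent has trivial degree: if $E=E^2\in\cA_g$ then $E\in\cA_{g^2}$, so $g=g^2$, i.e.\ $g=e$; hence all homogeneous idempotents lie in $\cA_e$. Let $r$ be the maximal number of nonzero, pairwise orthogonal homogeneous idempotents summing to $1$. Since the Albert algebra has rank $3$ and $1\in\cA_e$, we have $r\in\{1,2,3\}$, with profiles $(1,1,1)$, $(1,2)$ and $(3)$ respectively, and I would treat these three cases in turn. Throughout, the trace condition recalled above --- that $T(\cA_g\cA_h)=0$ unless $gh=e$ --- guarantees that $T|_{\cA_e}$ is nondegenerate and that $T$ induces a nondegenerate pairing of $\cA_g$ with $\cA_{g^{-1}}$; consequently the Peirce projections attached to a homogeneous idempotent preserve degree, so all Peirce components are graded subspaces and the coordinatizing norm forms are graded and nondegenerate. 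It is also convenient to keep in mind that $\Gamma$ corresponds to the quasitorus $\Diag(\Gamma)\subseteq\bAut(\cA)$, which is maximal precisely because $\Gamma$ is fine.

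In the case $r=3$ there is a homogeneous frame $1=E_1+E_2+E_3$ of orthogonal primitive idempotents of degree $e$. The Peirce decomposition then reads $\cA=\bigoplus_i\FF E_i\oplus\bigoplus_i\iota_i(\cC)$ with each off-diagonal space a graded copy of $\cC$, and the products \eqref{eq:Albertproduct} show that the multiplication of $\cA$ recovers the octonion product and its norm on these coordinates. Thus $\Gamma$ restricts to a grading on $\cC$; since the diagonal spaces $\FF E_i$ are already one-dimensional, any refinement of the octonion grading would lift to a refinement of $\Gamma$, so $\Gamma$ is fine if and only if the induced grading on $\cC$ is fine. By Theorem \ref{th:grOctonions} this is the Cartan grading or the $\bZ_2^3$-grading, yielding respectively the Cartan grading (universal group $\ZZ^4$) and the $\bZ_2^5$-grading on $\cA$. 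In the case $r=2$ one has $1=E+\wt{E}$ with $E$ primitive and $\wt{E}$ a homogeneous idempotent of rank $2$ admitting no homogeneous idempotent refining it. The one-dimensional identity component of $\Diag(\Gamma)$ refines the Peirce decomposition of $E$ into a $\bZ$-grading $\cA=\cA_{-2}\oplus\cA_{-1}\oplus\cA_0\oplus\cA_1\oplus\cA_2$, while $\cA_0(E)$ is a graded rank-two spin factor coordinatized again by $\cC$. The indivisibility of $\wt{E}$ forces the induced octonion grading to be the $\bZ_2^3$ (Cayley--Dickson) grading: a Cartan-type octonion grading would supply a homogeneous toral idempotent splitting $\wt{E}$ and push us into the case $r=3$. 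Combining the two gives exactly the $\bZ\times\bZ_2^3$-grading.

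The case $r=1$ is the heart of the matter. Here $\cA_e=\FF 1$, so $\Gamma$ is a \emph{division grading}: every nonzero homogeneous element is invertible in the Jordan sense. I would first use the cubic Cayley--Hamilton identity together with the nondegenerate pairing to show that $\Supp\Gamma$ generates a finite elementary abelian group and that every homogeneous component is one-dimensional, so that the grading is by some $\bZ_p^m$. The remaining products and the trace form then constrain the coordinate multiplication so tightly that one can identify the resulting structure with the Okubo symmetric composition algebra, forcing $\Supp\Gamma$ to generate $\bZ_3^3$ and recovering the $\bZ_3^3$-grading. Finally, when $\chr{\FF}=3$ the symmetric-composition construction underlying this grading degenerates (it requires a primitive cube root of unity), and one verifies directly that a division grading cannot exist, so the case drops out, as asserted.

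I expect the division case $r=1$ to be the main obstacle: pinning down that $p=3$, $m=3$ and all components one-dimensional, and then reconstructing the Okubo product canonically from the abstract graded data, is delicate, and the characteristic-$3$ exclusion must be argued by hand rather than inherited from the generic picture. A secondary technical point arises in the case $r=2$, where one must rigorously exclude a ``$(1,2)$-with-Cartan-coordinates'' configuration (showing it is never fine) and correctly exhibit the free rank-one torus direction. Once these are settled, the trichotomy in $r$ is exhaustive and produces exactly the four gradings; since their universal groups $\ZZ^4$, $\bZ_2^5$, $\bZ\times\bZ_2^3$ and $\bZ_3^3$ are pairwise non-isomorphic, the gradings are pairwise inequivalent, completing the classification.
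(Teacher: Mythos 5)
You should first be aware that the paper does not prove Theorem \ref{th:finegrAlbert}: it is quoted from \cite{DM_f4} and \cite{Albert}, so there is no internal argument to measure your proposal against. Your outline does reflect the broad strategy of the cited source (the trace condition $T(\cA_g\cA_h)=0$ for $gh\ne e$, analysis of homogeneous idempotents in $\cA_e$, coordinatization of the Peirce decomposition, reduction to gradings on composition algebras, and the Okubo algebra in the division case), but as written it is a programme rather than a proof, and each case hides a genuine gap beyond the ones you flag.

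The most concrete problem is in the case $r=3$: the step ``$\Gamma$ restricts to a grading on $\cC$'' is not well defined. The Peirce spaces $\iota_i(\cC)$ are graded subspaces, but the octonion product is recovered only through the cross terms $\iota_i(a)\iota_{i+1}(b)=\iota_{i+2}(\bar a\bar b)$, and $\iota_1(1)$ need not be homogeneous --- it is not for the Cartan grading itself, where $\degree\iota_1(e_1)=-\degree\iota_1(e_2)\ne 0$. What the graded frame actually induces is a grading on $\cC$ as a \emph{symmetric composition algebra} (the para-Cayley product $\bar a\bar b$), and the fine gradings of that structure include the Okubo $\bZ_3^2$-grading in addition to the two unital ones; one must then check that the Okubo-type coordinates merely reproduce the Cartan grading of $\cA$ in a non-universal realization rather than yielding a new fine grading. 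Your ``if and only if'' is also only half justified: fineness of the coordinate grading does not imply fineness of $\Gamma$, since one must additionally separate the three Peirce spaces into distinct cosets, which is exactly what produces the extra $\bZ^2$ (resp.\ $\bZ_2^2$) factor in the universal groups $\bZ^4$ and $\bZ_2^5$. In the case $r=2$ you assert, but do not establish, the free rank-one direction (nothing in your argument excludes a finite grading group here), and in the case $r=1$ the implication $\cA_e=\FF 1\Rightarrow$ division grading, the elementary-abelian-of-exponent-$3$ claim, the one-dimensionality of the components, the reconstruction of the Okubo product, and the exclusion of $\chr{\FF}=3$ all remain to be proved; you acknowledge the last items as the ``heart of the matter,'' so they are announced rather than hidden gaps, but the trichotomy currently predicts the answer without establishing it.
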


\subsection{Cartan grading}\label{ss:WeylCartan}

The universal group is $\bZ^4$, which is contained in $E=\bR^4$. Consider the following elements of $\bZ^4$: 
\[
\begin{split}
\epsilon_0&=\degree\iota_1(e_1)=a_1=(1,0,0,0),\\
\epsilon_1&=\degree\iota_1(u_1)=g_1=(0,0,1,0),\\
\epsilon_2&=\degree\iota_1(u_2)=a_3+g_2=(-1,-1,0,1),\\
\epsilon_3&=\degree\iota_1(u_3)=-a_2+g_3=(0,-1,-1,-1).
\end{split}
\]
Note that $\epsilon_i$, $0\leq i\leq 3$, are linearly independent, but do not form a basis of $\bZ^4$. We have for instance $\degree\iota_2(e_1)=a_2=\frac{1}{2}(-\epsilon_0-\epsilon_1-\epsilon_2-\epsilon_3)$ and $\degree\iota_3(e_1)=\frac{1}{2}(-\epsilon_0+\epsilon_1+\epsilon_2+\epsilon_3)$. Moreover, the supports of the Cartan grading $\Gamma$ on each of the subspaces $\iota_i(\cC)$ are:
\[
\begin{split}
\Supp\iota_1(\cC)&=\{\pm \epsilon_i\;|\; 0\leq i\leq 3\},\\[6pt]
\Supp\iota_2(\cC)
 &=\Supp \iota_1(\cC)(\iota_3(e_1)+\iota_3(e_2))\\
 &=\bigl\{\frac{1}{2}(\pm\epsilon_0\pm\epsilon_1\pm\epsilon_2\pm\epsilon_3)\;|\;\text{even number of $+$ signs}\bigr\},\\[6pt]
\Supp\iota_3(\cC)
 &=\Supp \iota_1(\cC)(\iota_2(e_1)+\iota_2(e_2))\\
 &=\bigl\{\frac{1}{2}(\pm\epsilon_0\pm\epsilon_1\pm\epsilon_2\pm\epsilon_3)\;|\;\text{odd number of $+$ signs}\bigr\}.
\end{split}
\]
Let us consider the following subset $\Phi$ of $E$: 
\[
\begin{split}
\Phi&\bydef\Bigl(\Supp\Gamma\cup \{\alpha+\beta\;|\; \alpha,\beta\in\Supp\iota_1(\cC),\ \alpha\ne\pm\beta\}\Bigr)\setminus\{0\}\\
    &=\Supp\iota_1(\cC)\cup \Supp\iota_2(\cC)\cup \Supp\iota_3(\cC)\cup \{\pm\epsilon_i\pm\epsilon_j\;|\; 0\leq i\ne j\leq 3\},
\end{split}
\]
which is the root system of type $F_4$ (see \cite[Chapter VI.4.9]{Bourbaki}). Note that $\epsilon_i$, $i=0,1,2,3$, form an orthogonal basis of $E$ relative to the unique (up to scalar) inner product that is invariant under the Weyl group of $\Phi$.

Identifying the Weyl group $\W(\Gamma)$ with a subgroup of $\Aut(\bZ^4)$, and this with a subgroup of $GL(E)$, we have:
\[
\begin{split}
\W(\Gamma)&\subset \{\mu\in \Aut(\bZ^4)\;|\; \mu(\Supp\Gamma)=\Supp\Gamma\}\\
          &\subset \{\mu\in GL(E)\;|\; \mu(\Phi)=\Phi\}\defby \Aut\Phi.
\end{split}
\]
The latter group is the automorphism group of the root system $\Phi$, which coincides with its Weyl group.

\smallskip

If $\chr{\FF}\ne 2,3$, then we can work with the Lie algebra $\Der(\cA)$ and prove the next theorem using known results on the simple Lie algebra of type $F_4$ in \cite{Seligman}. The proof below works directly with the Cartan grading on the Albert algebra and is valid for $\chr{\FF}\ne 2$.

\begin{theorem}\label{th:WeylCartan}
Let $\Gamma$ be the Cartan grading on the Albert algebra over an algebraically closed field of characteristic different from $2$. Identify $\Supp\Gamma\setminus\{0\}$ with the short roots in the root system $\Phi$ of type $F_4$. Then $\W(\Gamma)=\Aut\Phi$.
\end{theorem}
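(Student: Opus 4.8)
The plan is to combine the inclusion $\W(\Gamma)\subseteq\Aut\Phi$ established above with the fact that $\Aut\Phi$ (the Weyl group of $F_4$) has order $1152$, and to produce automorphisms in $\Aut(\Gamma)$ whose images exhaust $\Aut\Phi$. The organizing principle is the decomposition $\Aut\Phi\cong W(D_4)\rtimes\sg(3)$: the three subspaces $\iota_1(\cC),\iota_2(\cC),\iota_3(\cC)$ have as supports the three eight-element sets of short roots computed above, namely $\{\pm\epsilon_i\}$ and the two half-integer sets, which are exactly the weight sets of the vector representation and the two half-spin representations of type $D_4$. The triality group $\sg(3)$ permutes these three sets, while the normal subgroup $W(D_4)$, realized on $\Supp\Gamma$ as the even signed permutations of $\epsilon_0,\epsilon_1,\epsilon_2,\epsilon_3$, preserves each of them. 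I would realize the two factors separately.

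For the triality factor I would use index permutations. The cyclic map $\rho\colon E_i\mapsto E_{i+1}$, $\iota_i(x)\mapsto\iota_{i+1}(x)$ is an automorphism of $\calA$, since \eqref{eq:Albertproduct} gives $\rho(\iota_i(a)\iota_{i+1}(b))=\iota_{i+1}(a)\iota_{i+2}(b)=\iota_i(\bar a\bar b)=\rho(\iota_{i+2}(\bar a\bar b))$. The twisted transposition $\sigma\colon E_1\leftrightarrow E_2$, $E_3\mapsto E_3$, $\iota_1(a)\mapsto\iota_2(\bar a)$, $\iota_2(a)\mapsto\iota_1(\bar a)$, $\iota_3(a)\mapsto\iota_3(\bar a)$ is also an automorphism: using that conjugation is an isometric involutive anti-automorphism of $\cC$, one checks $\sigma(\iota_1(a)\iota_2(b))=\iota_2(\bar a)\iota_1(\bar b)=\iota_3(ba)=\iota_3(\overline{\bar a\bar b})=\sigma(\iota_3(\bar a\bar b))$, and similarly for the remaining products. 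Both lie in $\Aut(\Gamma)$ and act on the three support sets as a $3$-cycle and a transposition, so their images generate the full triality group $\sg(3)=\Aut\Phi/W(D_4)$.

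The normal factor $W(D_4)$ is the substantive part. Any element of $\W(\Gamma)$ that preserves the three support sets is induced by some $\varphi\in\Aut(\calA)$ fixing each subspace $\iota_i(\cC)$, and from $E_i\iota_i(a)=0$, $E_{i+1}\iota_i(a)=\tfrac12\iota_i(a)$ such a $\varphi$ must fix each $E_i$. The stabilizer $\Stab(E_1,E_2,E_3)$ in $\Aut(\calA)$ is a copy of $\Spin(8)$, acting on $\iota_1(\cC)$ as the vector representation $\SOrt(8)$ and on $\iota_2(\cC),\iota_3(\cC)$ as the two half-spin representations; the Cartan grading is precisely the weight-space decomposition for its diagonal maximal torus, so the grading-preserving elements of $\Spin(8)$ surject onto the Weyl group $W(D_4)$. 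Concretely this yields the group $\sg(4)$ permuting the four $\epsilon_i$ together with all even sign changes.

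Combining the two factors, $\W(\Gamma)$ contains both $W(D_4)$ and the triality $\sg(3)$, hence all of $W(D_4)\rtimes\sg(3)=\Aut\Phi$; together with the reverse inclusion this gives $\W(\Gamma)=\Aut\Phi$. I expect the realization of the full $W(D_4)$ to be the main obstacle. The automorphisms extended from $\cC$ via $\varphi(E_i)=E_i$, $\varphi(\iota_i(x))=\iota_i(\varphi(x))$ only reproduce, by Theorem \ref{th:WeylCartanCayley}, the order-$12$ subgroup $\langle(\epsilon_1\epsilon_2\epsilon_3),(\epsilon_2\epsilon_3),-I\rangle$, because a $\cC$-automorphism can never send the idempotents $e_1,e_2$ (giving the root $\pm\epsilon_0$) to the nilpotents $u_i,v_i$ (giving $\pm\epsilon_i$); the missing transpositions $(\epsilon_0\,\epsilon_i)$ and the double sign changes are only available through the triality action of $\Spin(8)$ on the three subspaces, and checking that this action sweeps out every even signed permutation is the heart of the argument.
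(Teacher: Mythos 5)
Your proposal is correct, but it organizes the generation of $\Aut\Phi$ along different lines than the paper. You split $\Aut\Phi$ as $W(D_4)\rtimes\sg(3)$, realize the triality quotient by the index permutations $\rho$ and $\sigma$ (the same family as the paper's $\psi_{(123)}$ and $\psi_{(23)}$), and then obtain the entire normal subgroup $W(D_4)$ in one stroke from the structure of the stabilizer of the frame $\{E_1,E_2,E_3\}$: it is $\Spin(\cC,n)$ acting by the vector and two half-spin representations on the $\iota_i(\cC)$, the Cartan grading refines its weight decomposition, and the normalizer of the maximal torus surjects onto $W(D_4)$. The paper instead works with the index-$3$ stabilizer of $\Supp\iota_1(\cC)$ in $\Aut\Phi$, which is the full hyperoctahedral group $\ZZ_2^4\rtimes\sg(4)$, and exhibits explicit generators for it: the sign change $\epsilon_0\mapsto-\epsilon_0$ from $\psi_{(23)}$, the $3$-cycle $(\epsilon_1\epsilon_2\epsilon_3)$ from the triality automorphism $\tau$ of $\cC$, and --- the one substantive generator --- the transposition $\epsilon_0\leftrightarrow\epsilon_1$ coming from a single concrete element $c=x\cdot y\in\Spin(\cC,n)$, verified by a hands-on computation of $\chi_c$ and $\rho_c^{\pm}$ in the Clifford algebra. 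Both proofs must at some point leave the subgroup of automorphisms extended from $\Aut(\cC)$ and enter the Spin group, and you correctly identify this as the heart of the matter; your version trades the paper's page of explicit Clifford computation for the general facts that the frame stabilizer is $\Spin(\cC,n)$ with the stated representations and that $N(T)\to W(D_4)$ is onto. These are standard and valid in any characteristic other than $2$, so your argument still covers characteristic $3$, which is the reason the paper avoids Lie-algebra methods here --- but they are a heavier import than the paper's single explicit $\psi_c$. Two minor points: the verification that $\sigma$ is an automorphism should also cover the products $\iota_i(a)\iota_i(b)$ and $E_i\iota_j(a)$ (it does go through, using that conjugation is an isometry), and your parenthetical description of the order-$12$ subgroup arising from $\Aut(\cC)$ is not load-bearing and could be dropped.
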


\begin{proof}
Let us obtain first some distinguished elements in $\W(\Gamma)$.

1) The order $3$ automorphism of $\cA$: $\psi_{(123)}\colon E_i\mapsto E_{i+1}$, $\iota_i(x)\mapsto \iota_{i+1}(x)$, is in $\Aut(\Gamma)$, and its projection $\mu_{(123)}$ into $\W(\Gamma)$ permutes cyclically $\Supp\iota_1(\cC)$, $\Supp\iota_2(\cC)$ and $\Supp\iota_3(\cC)$.

2) The order $2$ automorphism of $\cA$: $\psi_{(23)}: E_1\mapsto E_1$, $E_2\leftrightarrow E_3$, $\iota_1(x)\mapsto \iota_1(\bar x)$, $\iota_2(x)\leftrightarrow \iota_3(\bar x)$, also belongs to $\Aut(\Gamma)$, and its projection $\mu_{(23)}$ sends $\epsilon_0$ to $-\epsilon_0$ (as $\psi_{(23)}(\iota_1(e_1))=\iota_1(e_2)$), and leaves invariant each $\epsilon_i$ for $i=1,2,3$.

3) Now consider $\Cl(\cC,n)$, the Clifford algebra of $\cC$ (regarded as a quadratic space), and the group $\Spin(\cC,n)\subset\Cl(\cC,n)$. It is well-known that any $c\in\Spin(\cC,n)$ gives rise to an automorphism $\psi_c$ of the Albert algebra fixing the idempotents $E_1,E_2,E_3$ --- see e.g. \cite{KMRT}. Explicitly, $\psi_c$ is defined by setting $\psi_c(\iota_1(z))=\iota_1(\chi_c(z))$, $\psi_c(\iota_2(z))=\iota_2(\rho^+_c(z))$ and $\psi_c(\iota_3(z))=\iota_3(\rho^-_c(z))$ for all $z\in\cC$, where $\chi_c(z)=c\cdot z\cdot c^{-1}$ and $\cdot$ denotes multiplication in the Clifford algebra. If $c=x\cdot y$ with $x,y\in\cC$, then 
\begin{equation}\label{eq:rho_pm}
\rho^+_c(z)=(zy)\bar{x}\quad\mbox{and}\quad\rho^-_c(z)=\bar{x}(yz)\quad\mbox{for all}\quad z\in\cC. 
\end{equation}
Consider the elements
\[
x=\frac{1}{\sqrt{2}}(e_1+e_2+u_1+v_1)\quad\mbox{and}\quad y=\frac{\bi}{\sqrt{2}}(e_1-e_2+u_1-v_1).
\]
Then $c=x\cdot y\in\Spin(\cC,n)$, since $n(x)=n(y)=1$. Also note that $c^{\cdot 2}=-1$, as $x$ and $y$ are orthogonal, and so $\chi_c$ has order $2$. Since $e_1+e_2$ and $y$ are also ortogonal, we compute:
\[
\begin{split}
\chi_c(e_1+e_2)&=c\cdot(e_1+e_2)\cdot c^{-1}=-c\cdot(e_1+e_2)\cdot c=-x\cdot(e_1+e_2)\cdot x\cdot y^{\cdot 2}\\
    &=-\frac{1}{2}(1-(e_1+e_2)\cdot(u_1+v_1))\cdot(e_1+e_2+u_1+v_1)\\
    &=-\frac{1}{2}\bigl(e_1+e_2+(e_1+e_2)^{\cdot 2}\cdot(u_1+v_1)+(u_1+v_1)-(e_1+e_2)\bigr)\\
    &=-(u_1+v_1).
\end{split}
\]
A similar calculation shows that $\chi_c(e_1-e_2)=-(u_1-v_1)$. Hence we have:
\[
\chi_c: e_1\leftrightarrow -u_1,\ e_2\leftrightarrow -v_1,\ u_2\mapsto u_2,\ u_3\mapsto u_3,\ v_2\mapsto v_2,\ v_3\mapsto v_3.
\]
Let us check that the associated automorphism $\psi_c$ of $\cA$ is in $\Aut(\Gamma)$. Since the action of $\psi_c$ on $\iota_1(\cC)$ is given by $\chi_c$, we already know that $\psi_c$ permutes the homogeneous components of $\iota_1(\cC)$. Since every homogeneous element of $\iota_3(\cC)$ lies either in $\iota_1(\cC)\iota_2(e_1)$ or in $\iota_1(\cC)\iota_2(e_2)$, and every homogeneous element of $\iota_2(\cC)$ lies either in $\iota_1(\cC)\iota_3(e_1)$ or in $\iota_1(\cC)\iota_3(e_2)$, it remains to check that $\psi_c(\iota_i(e_j))$ is homogeneous for $i=2,3$ and $j=1,2$. Using \eqref{eq:rho_pm}, we compute:
\[
\begin{split}
\psi_c(\iota_2(e_1))&=\iota_2(\rho^+_c(e_1))=\iota_2\bigl((e_1y)\bar x\bigr)\\
&=\frac{\bi}{2}\iota_2\bigl((e_1(e_1-e_2+u_1-v_1))(e_1+e_2-u_1-v_1)\bigr)\\
&=\frac{\bi}{2}\iota_2\bigl((e_1+u_1)(e_1+e_2-u_1-v_1)\bigr)=\frac{\bi}{2}\iota_2(e_1-u_1+u_1+e_1)=\bi\iota_2(e_1).
\end{split}
\]
Similar calculations show that $\psi_c(\iota_2(e_2))=-\bi\iota_2(e_2)$, $\psi_c(\iota_3(e_1))=-\bi\iota_3(v_1)$, and $\psi_c(\iota_3(e_2))=\bi\iota_3(u_1)$.
Therefore, $\psi_c\in\Aut(\Gamma)$, and its projection $\mu_c$ into $\W(\Gamma)$ acts as follows: $\epsilon_0\leftrightarrow \epsilon_1$, $\epsilon_2\mapsto \epsilon_2$, $\epsilon_3\mapsto \epsilon_3$.

4) Finally, the order $3$ automorphism $\tau$ of $\cC$ given by \eqref{eq:tauC} extends to an automorphism of $\cA$ fixing $E_i$ via  $\iota_i(x)\mapsto\iota_i(\tau(x))$ for all $x\in\cC$ and $i=1,2,3$. The projection of this automorphism into $\W(\Gamma)$ is the $3$-cycle $\epsilon_1\mapsto \epsilon_2\mapsto \epsilon_3\mapsto \epsilon_1$.

\smallskip

Now we are ready to prove the theorem. Any $\mu\in\Aut\Phi$ permutes the subsets $\Supp\iota_i(\cC)$, as these are the only subsets $S$ of $\Supp\Gamma\setminus\{0\}$ such that for any $\delta \in S$, $S=\{\pm\delta\}\cup\{\gamma\in\Supp\Gamma\setminus\{0\}\;|\;(\gamma,\delta)=0\}$. Thus, composing with a suitable power of $\mu_{(123)}$ from 1), we may assume $\mu(\Supp\iota_1(\cC))=\Supp\iota_1(\cC)$. But the group $\{\mu\in\Aut\Phi\;|\; \mu(\Supp\iota_1(\cC))=\Supp\iota_1(\cC)\}$ is isomorphic to $\ZZ_2^4\rtimes\sg(4)$, consisting of the permutations of the $\epsilon_i$'s followed by multiplication of some of the $\epsilon_i$'s by $-1$. This subgroup is generated by $\mu_{(23)}$ from 2), the transposition $\mu_c$ from 3), and the $3$-cycle from 4). 
\end{proof}

\begin{remark}
We have $\Stab(\Gamma)=\Diag(\Gamma)$. It is a maximal torus in the algebraic group $\Aut(\cA)$.
\end{remark}


\subsection{$\bZ_2^5$-grading}\label{ss:WeylZ25}

Write $\ZZ_2^5=\ZZ_2^2\times\ZZ_2^3$ where $\ZZ_2^3$ is generated by $c_j$, $j=1,2,3$, as in \eqref{CD_grading}. Then the $\bZ_2^5$-grading $\Gamma$ is defined by setting
\begin{equation}\label{Z_25_grading}
\degree\iota_1(1)=a,\quad\degree\iota_2(1)=b,\quad\degree\iota_3(w_j)=a+b+c_j,\; j=1,2,3,
\end{equation}
where $\{a,b\}$ is the standard basis of $\ZZ_2^2$.

\begin{theorem}\label{th:WeylZ25}
Let $\Gamma$ be the $\ZZ_2^5$-grading on the Albert algebra as in \eqref{Z_25_grading} over an algebraically closed field of characteristic different from $2$. 
Let $T$ be the subgroup of $\ZZ_2^5$ generated by $c_j$, $j=1,2,3$. Then $\W(\Gamma)=\{\mu\in\Aut(\ZZ_2^5)\;|\; \mu(T)=T\}$.
\end{theorem}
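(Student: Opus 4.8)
The plan is to prove the two inclusions separately, the containment $\W(\Gamma)\subseteq\{\mu\in\Aut(\ZZ_2^5)\mid\mu(T)=T\}$ being easy and the reverse being the substantial direction. For the easy inclusion I would give an intrinsic description of $T$ in terms of the grading. From \eqref{Z_25_grading} the support is $\supp\Gamma=\{0\}\cup(a+T)\cup(b+T)\cup(a+b+T)$; the component $\cA_0=\bF E_1\oplus\bF E_2\oplus\bF E_3$ is the only one of dimension greater than $1$, and every homogeneous element outside $\cA_0$ lies in one of the three arms $\iota_i(\cC)$. A count ($|\supp\Gamma|=1+3\cdot 8=25$, while $|\ZZ_2^5|=32$) shows that the seven nonzero elements of $T$ are exactly the elements of $\ZZ_2^5$ outside $\supp\Gamma$, so $T=\{0\}\cup(\ZZ_2^5\setminus\supp\Gamma)$ is determined by $\supp\Gamma$ alone. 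Since every $\mu\in\W(\Gamma)$ acts on $U(\Gamma)=\ZZ_2^5$ preserving $\supp\Gamma$ and fixing $0$, it preserves $T$, giving the inclusion.

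For the reverse inclusion I would use the splitting $\ZZ_2^5=\ZZ_2^2\oplus T$, with $\ZZ_2^2=\langle a,b\rangle$, which writes the stabilizer in block form as $(\Aut(\ZZ_2^2)\times\Aut(T))\ltimes\Hom(\ZZ_2^2,T)$, and realize generators of each factor by automorphisms of $\cA$ lying in $\Aut(\Gamma)$. The factor $\Aut(T)\cong GL_3(2)$ is realized via Theorem~\ref{th:Z23Weyl}: each $\sigma\in\Aut(\bZ_2^3)$ is induced by an automorphism $\varphi_\sigma$ of $\cC$, which extends to $\cA$ by $E_i\mapsto E_i$, $\iota_i(x)\mapsto\iota_i(\varphi_\sigma(x))$; the induced map on $\ZZ_2^5$ fixes $a,b$ and acts as $\sigma$ on $T$. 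The factor $\Aut(\ZZ_2^2)\cong\sg(3)$, which permutes the three arms, is realized by the automorphisms $\psi_{(123)}\colon\iota_i(x)\mapsto\iota_{i+1}(x)$ and $\psi_{(23)}\colon\iota_1(x)\mapsto\iota_1(\bar x),\ \iota_2(x)\leftrightarrow\iota_3(\bar x)$, using that standard conjugation preserves the $\bZ_2^3$-grading of $\cC$; both fix $T$ pointwise. Together these fill the Levi subgroup $\Aut(\ZZ_2^2)\times\Aut(T)$.

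It remains to realize the shears $\Hom(\ZZ_2^2,T)$, which I expect to be the main obstacle. The shear $f$ with $f(a)=s$, $f(b)=0$ ($s\in T$) must shift the degrees of $\iota_1(\cC)$ and $\iota_3(\cC)$ by $s$ while preserving those of $\iota_2(\cC)$. I would realize it by an automorphism of $\cA$ fixing the frame $\{E_i\}$ and acting on the arms by a triality-related triple of multiplication operators, in the spirit of \eqref{eq:rho_pm}: taking a homogeneous $u\in\cC$ of degree $s$ with $n(u)=1$ and setting $(f_1,f_2,f_3)=(L_u,C_u,R_{\bar u})$ (left multiplication by $u$, conjugation $x\mapsto uxu^{-1}$, right multiplication by $\bar u$), the assignment $\iota_1(x)\mapsto\iota_1(f_1x)$, $\iota_2(x)\mapsto\iota_2(f_2x)$, $\iota_3(x)\mapsto\iota_3(f_3x)$ should define an element of $\Spin(\cC,n)\subset\Aut(\cA)$ whose projection into $\W(\Gamma)$ is exactly $f$. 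Conjugating by the arm swap in $\sg(3)$ then yields the shears with $f(a)=0$, $f(b)=s$, and these two families generate all of $\Hom(\ZZ_2^2,T)$; combined with the Levi this produces the whole stabilizer.

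The technical heart is checking that $(L_u,C_u,R_{\bar u})$ genuinely defines an automorphism, i.e.\ that it respects the products \eqref{eq:Albertproduct}. Since each of the three maps is an isometry of $n$, the relations $\iota_i(a)\iota_i(b)=2n(a,b)(E_{i+1}+E_{i+2})$ hold automatically, so everything reduces to the triality identity $f_3(\bar a\bar b)=\overline{f_1a}\,\overline{f_2b}$ forced by $\iota_1(a)\iota_2(b)=\iota_3(\bar a\bar b)$ (together with its two cyclic companions). With the triple above this becomes the Moufang-type identity $(pq)\bar u=(p\bar u)(u q\bar u)$, which holds in any octonion algebra. Verifying this identity is where the real content lies: it is the octonionic manifestation of triality that is invisible at the level of $\Aut(\cC)$, and it is precisely what makes the shear part of the Weyl group — absent in the purely $\Aut(T)$ and $\sg(3)$ contributions — actually occur.
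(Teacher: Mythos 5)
Your proof is correct in substance and follows the same skeleton as the paper's: both inclusions are established, the stabilizer of $T$ is decomposed into the same three blocks $\Aut(\ZZ_2^2)$, $\Aut(T)$ and $\Hom(\ZZ_2^2,T)$, and the first two blocks are realized by the same automorphisms (the arm permutations $\psi_{(123)}$, $\psi_{(23)}$ and the extensions to $\cA$ of the automorphisms of $\cC$ supplied by Theorem~\ref{th:Z23Weyl}). Two local differences are worth recording. For the easy inclusion, your observation that $T\setminus\{0\}$ is exactly the complement of $\supp\Gamma$ in $\ZZ_2^5$ (since $25+7=32$), so that $T$ is determined by the support alone, is shorter than the paper's argument, which tracks the permutation of the idempotents $E_i$ and the cosets $a+T$, $b+T$, $a+b+T$. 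For the shears, the paper takes $c=1\cdot y\in\Spin(\cC,n)$ with $y$ homogeneous of norm $1$ and reads the related triple off \eqref{eq:rho_pm} (shifting the arms $\iota_2,\iota_3$), whereas you write the triple $(L_u,C_u,R_{\bar u})$ explicitly and verify compatibility by Moufang identities (shifting $\iota_1,\iota_3$); after conjugating by the arm permutations both families generate all of $\Hom(\ZZ_2^2,T)$, so either works. One caution on the step you rightly call the technical heart: the identity $(pq)\bar u=(p\bar u)(uq\bar u)$ does \emph{not} hold for arbitrary norm-one $u$ in an octonion algebra — for $u=\lambda e_1+\lambda^{-1}e_2$, $p=u_1$, $q=u_2$ the two sides are $\lambda^{-1}v_3$ and $\lambda^{3}v_3$. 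It does hold whenever $t(u)=0$, so that $\bar u=-u=u^{-1}$, which is automatic for your homogeneous $u$ of nonzero degree in the $\ZZ_2^3$-grading; in that case it follows from the right Moufang identity $((p\bar u)y)\bar u=p(\bar u y\bar u)$ with $y=uqu$. Note also that the two cyclic companions are genuinely distinct identities (they reduce to the left and middle Moufang identities respectively), so all three must be checked; they do all hold for trace-zero $u$ of norm one, and with that qualification your argument is complete.
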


\begin{proof}
Let $K$ the subgroup generated by $a$ and $b$. Then $\ZZ_2^5=K\times T$. Identifying $\Aut(\ZZ_2^5)$ with $GL_5(2)$, the stabilizer of $T$ (as a set) consists of all matrices of the form $\left(\begin{tabular}{c|c} $*$&$0$\\ \hline $*$& $*$\end{tabular}\right)$.
The automorphism $\psi_{(123)}$ defined in step 1) in the proof of Theorem \ref{th:WeylCartan} and the automorphism $\psi_{(12)}$ that is analogous to $\psi_{(23)}$ defined in step 2) belong to $\Aut(\Gamma)$, and their projections into $\W(\Gamma)$ act as follows:
\[
\begin{split}
\mu_{(123)}&\colon\; a\mapsto b\mapsto a+b\mapsto a,\; c_j\mapsto c_j,\; j=1,2,3,\\
\mu_{(12)}&\colon\; a\leftrightarrow b,\; c_j\mapsto c_j,\; j=1,2,3.
\end{split}
\]
Therefore, the subgroup $\{\mu\in GL_5(2)\;|\; \mu(K)=K,\, \mu|_T=\id\}$, which consists of matrices of the form 
$\left(\begin{tabular}{c|c} $*$&$0$\\ \hline $0$& $I$\end{tabular}\right)$, is contained in $\W(\Gamma)$. (Note that this subgroup is the symmetric group on the elements $a$, $b$ and $a+b$.)

Now, consider the subgroup $\{\mu\in GL_5(2)\;|\; \mu(T)=T,\, \mu|_K=\id\}$, which consists of all matrices of the form 
$\left(\begin{tabular}{c|c} $I$&$0$\\ \hline $0$& $*$\end{tabular}\right)$. By Theorem \ref{th:Z23Weyl}, for any such $\mu$, there is an automorphism $\varphi$ of $\cC$ that belongs to $\Aut(\Gamma_0)$, where $\Gamma_0$ is the $\ZZ_2^3$-grading \eqref{CD_grading} on $\cC$, such that the projection of $\vphi$ into $\W(\Gamma_0)$ coincides with  $\mu|_T$. Then the automorphism of $\cA$ that fixes $E_i$ and takes $\iota_i(x)$ to $\iota_i(\varphi(x))$, for all $x\in \cC$ and $i=1,2,3$, belongs to $\Aut(\Gamma)$, and its projection into $\W(\Gamma)$ coincides with $\mu$. Hence the subgroup under consideration is contained in $\W(\Gamma)$.

For any $h\in T$, consider the element $\mu\in GL_5(2)$ such that $\mu(a)=a$, $\mu(b)=b+h$, and $\mu|_T=\id$. We claim that $\mu$ is in $\W(\Gamma)$. Take $x=1$ and $y$ a homogeneous element in $\cC_h$ of norm $1$. Then the element $c=x\cdot y\in\Spin(\cC,n)$ gives rise to the automorphism $\psi_c$ of $\cA$ --- see step 3) in the proof of Theorem \ref{th:WeylCartan}. The restriction $\psi_c\vert_{\iota_1(\cC)}$ is given by $\chi_c$ and hence stabilizes the homogeneous components in $\iota_1(\cC)$. For $z\in\cC_u$ with $u\in T$, we have $\iota_2(z)\in\cA_{b+u}$ and, using \eqref{eq:rho_pm}, we obtain $\psi_c(\iota_2(z))=\iota_2(\rho_c^+(z))=\iota_2(zy)\in\cA_{b+u+h}$; also, $\iota_3(z)\in\cA_{a+b+u}$ and hence we obtain $\psi_c(\iota_3(z))=\iota_3(\rho_c^-(z))=\iota_3(yz)\in\cA_{a+b+u+h}$. Therefore, $\psi_c\in\Aut(\Gamma)$, and its projection $\mu_c$ into $\W(\Gamma)$ fixes $a$ and the elements of $T$ while taking $b$ to $b+h$. By symmetry, we may also find an element in $\W(\Gamma)$ which fixes $b$ and the elements of $T$ and takes $a$ to $a+h$. Hence all matrices of the form $\left(\begin{tabular}{c|c} $I$&$0$\\ \hline $*$& $I$\end{tabular}\right)$ are contained in $\W(\Gamma)$.

So far, we have proved that the stabilizer of $T$ is contained in $\W(\Gamma)$. But conversely, if $\psi\in\Aut(\Gamma)$, then $\psi(\cA_e)=\cA_e$, so  $\psi$ permutes the idempotents $E_i$, $i=1,2,3$, and hence induces a permutation of the elements $a,b,a+b$. By composing $\psi$ with a suitable element of the stabilizer of $T$,  we may assume that $\psi(E_i)=E_i$ for all $i=1,2,3$. Then the projection of $\psi$ into $\W(\Gamma)$ preserves the cosets $a+T$, $b+T$ and $a+b+T$, and hence it preserves $T$.
\end{proof}

\begin{remark} 
As any $\psi\in\Stab(\Gamma)$ fixes $E_i$ and multiplies each $\iota_i(w_j)$, $i,j=1,2,3$, by either $1$ or $-1$, we see that $\Stab(\Gamma)=\Diag(\Gamma)$ is isomorphic to $\bZ_2^5$. 
\end{remark}

\subsection{$\bZ\times\bZ_2^3$-grading}\label{ss:WeylZZ23}

\begin{theorem}\label{th:WeylZZ23}
Let $\Gamma$ be the $\ZZ\times\ZZ_2^3$-grading on the Albert algebra defined by \eqref{eq:Z_Z23_grading} and \eqref{CD_grading} over an algebraically closed field of characteristic different from $2$. Then $\W(\Gamma)=\Aut(\bZ\times\bZ_2^3)$.
\end{theorem}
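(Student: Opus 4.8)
The plan is to exploit that $U\bydef U(\Gamma)=\bZ\times\bZ_2^3$ is the universal group, so $\W(\Gamma)$ embeds into $\Aut(U)$; it then suffices to realize every element of $\Aut(U)$ as the projection of a self-equivalence of $\cA$. First I would describe $\Aut(U)$ explicitly. Since $\bZ_2^3$ is the torsion subgroup of $U$, it is characteristic, and a homomorphism is an automorphism exactly when it acts invertibly on $\bZ_2^3$ and sends a generator of the free part to $(\epsilon,f)$ with $\epsilon=\pm1$ and $f\in\bZ_2^3$ arbitrary. Hence every $\mu\in\Aut(U)$ has the form $\mu(n,v)=(\epsilon n,\,nf+Av)$ with $\epsilon\in\{\pm1\}$, $f\in\bZ_2^3$ and $A\in\Aut(\bZ_2^3)\cong GL_3(2)$, and $\Aut(U)$ is generated by three families: (i) $\mu_A\colon(n,v)\mapsto(n,Av)$; (ii) the reflection $\theta^\sharp\colon(n,v)\mapsto(-n,v)$; and (iii) the shears $\sigma_f\colon(n,v)\mapsto(n,nf+v)$. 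So I would realize each family by an explicit element of $\Aut(\Gamma)$.

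For family (i) I would invoke Theorem~\ref{th:Z23Weyl}: for the $\bZ_2^3$-grading $\Gamma_0$ on $\cC$ we have $\W(\Gamma_0)=\Aut(\bZ_2^3)$, so given $A$ there is $\varphi\in\Aut(\cC)$ stabilizing $\Gamma_0$ and projecting to $A$. Extending $\varphi$ to $\cA$ by $E_i\mapsto E_i$, $\iota_i(x)\mapsto\iota_i(\varphi(x))$, and using $\varphi(1)=1$, that $\varphi$ commutes with the standard conjugation and preserves $\cC_0$, a direct check on the spanning elements shows this extension fixes $E$, $\wt{E}$, $S^{\pm}$ while sending $\nu(a)\mapsto\nu(\varphi(a))$ and $\nu_{\pm}(x)\mapsto\nu_{\pm}(\varphi(x))$; thus it lies in $\Aut(\Gamma)$ and projects to $\mu_A$. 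For family (ii) I would exhibit the linear involution $\theta$ with $\theta(E)=E$, $\theta(\wt{E})=\wt{E}$, $\theta(S^{\pm})=S^{\mp}$, $\theta(\nu_{\pm}(x))=\nu_{\mp}(x)$ and $\theta(\nu(a))=-\nu(a)$, and verify against the products listed after \eqref{eq:Z_Z23_grading} that $\theta$ is an automorphism (the minus sign on $\nu$ is forced by the relation for $\nu_+(x)\nu_-(y)$). By construction $\theta$ reverses the $\bZ$-degree and fixes the $\bZ_2^3$-degree, so it projects to $\theta^\sharp$.

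For family (iii) I would use the $\Spin$-automorphisms from step 3) of the proof of Theorem~\ref{th:WeylCartan}. Given $f\neq0$, pick $w\in\cC_f$ with $n(w)=1$ and set $c=1\cdot w\in\Spin(\cC,n)$, giving $\psi_c\in\Aut(\cA)$ fixing the $E_i$, with $\rho^+_c(z)=zw$ and $\rho^-_c(z)=wz$ by \eqref{eq:rho_pm}. A short Clifford computation shows $\chi_c(1)=-1$ and that $\chi_c$ preserves each $\bZ_2^3$-component of $\cC_0$; combining this with $\bar w=-w$ yields $\psi_c(S^{\pm})=S^{\mp}$, $\psi_c(\nu(a))=\nu(\chi_c(a))$ and $\psi_c(\nu_{\pm}(x))=\nu_{\mp}(xw)$. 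Hence $\psi_c\in\Aut(\Gamma)$ and projects to $(n,v)\mapsto(-n,\,v+(n\bmod2)f)$, so $\theta\circ\psi_c$ projects to the shear $\sigma_f$. Since families (i)--(iii) generate $\Aut(U)$, the images of the automorphisms built above fill all of $\Aut(U)$, giving $\W(\Gamma)=\Aut(U)$. The main obstacle is the $\Spin$ computation in family (iii): one must evaluate $\chi_c$ correctly---in particular the sign $\chi_c(1)=-1$, which is precisely what makes $\psi_c$ a grading equivalence rather than a degree-inconsistent map---and then confirm that its action on $\nu_{\pm}$, $\nu$ and $S^{\pm}$ assembles into a single group automorphism of $U$ before composing with $\theta$.
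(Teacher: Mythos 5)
Your proof is correct and takes essentially the same route as the paper: the same three generating families of $\Aut(\bZ\times\bZ_2^3)$ (automorphisms of the torsion subgroup lifted via Theorem~\ref{th:Z23Weyl}, the sign flip realized by your $\theta$, which is the paper's $\psi_0$, and the shears realized by $\Spin$-induced automorphisms $\psi_c$). The only difference is the choice of $c$ for the shears: the paper takes $c=z\cdot y$ with $z,y$ homogeneous of norm $1$ in $\cC_0$ and $\deg z+\deg y=f$, so that $\chi_c(1)=1$ and $\psi_c$ realizes the shear directly, whereas your $c=1\cdot w$ gives the shear composed with the sign flip and so needs the extra composition with $\theta$; both work.
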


\begin{proof}
Let $T$ be the subgroup generated by $c_j$, $j=1,2,3$. Then $T$ is the torsion subgroup of $\bZ\times\bZ_2^3$ and hence we have $\mu(T)=T$ for all $\mu\in\Aut(\ZZ\times\ZZ_2^3)$. Let $a=(1,\bar 0,\bar 0,\bar 0)$.

The group $\Aut(\bZ\times \bZ_2^3)$ is generated by 1) the automorphism $\mu_0$ that fixes $T$ point-wise and takes $a$ to $-a$, 2) the automorphisms $\beta_h$, for $h\in T$, that fix $T$ point-wise and take $a$ to $a+h$, and 3) the automorphisms of $T$ extended to $\bZ\times \bZ_2^3$ by fixing $a$. We will show that all these automorphisms are contained in $\Aut(\Gamma)$.

1) The order $2$ automorphism $\psi_0$ of $\cA$ given by $S^{\pm}\mapsto S^{\mp}$, $\nu_{\pm}(x)\mapsto\nu_{\mp}(x)$, $\nu(a)\mapsto -\nu(a)$, for $x\in \cC$ and $a\in\cC_0$, belongs to $\Aut(\Gamma)$, and its projection into $\W(\Gamma)$ is precisely $\mu_0$.

2) Consider the $T$-grading $\Gamma_0$ on $\cC$ given by \eqref{CD_grading}. Fix $h\in H$. Pick a norm $1$ element $x\in \cC_h$, then take $y\in\cC_0$ homogeneous of norm $1$ with $n(x,y)=0$. Then $x=-xy^2=-(xy)y=zy$, where $z=-xy$ is a homogeneous element of norm $1$ in $\cC_0$. Note that $\degree y+\degree z=\degree x= h$. Consider the element $c=z\cdot y\in\Spin(\cC_0,n)$ and the associated automorphism $\psi_c$ of $\cA$. Then $\psi_c$ stabilizes the homogeneous components in $\nu(\cC_0)$, while $\psi_c(\nu_\pm(w))=\nu_\pm(\rho_c^+(w))=-\nu_\pm((wy)z))$ --- see \cite[Remark 6.4]{Albert}. Hence $\psi_c$ belongs to $\Aut(\Gamma)$, and its projection into $\W(\Gamma)$ fixes $T$ point-wise and takes $a+u$ to $a+u+h$ for any $u\in T$. Thus, this projection is the desired element $\beta_h$.

3) Given any automorphism $\mu$ of $T$, Theorem \ref{th:Z23Weyl} tells us that there is an automorphism $\varphi$ of $\cC$ that belongs to $\Aut(\Gamma_0)$ and whose projection into $\W(\Gamma_0)$ is $\mu$. The automorphism $\psi$ of $\cA$ determined by $\psi(S^\pm)=S^\pm$, $\psi(\nu_\pm(x))=\nu_\pm(\varphi(x))$, for all $x\in\cC$, belongs to $\Aut(\Gamma$), and its projection into $\W(\Gamma)$ is the automorphism of $\bZ\times\bZ_2^3$ fixing $a$ and restricting to $\mu$ on $T$.
\end{proof}

\begin{remark} 
One can show that $\Stab(\Gamma)=\Diag(\Gamma)$, which is isomorphic to $\FF^\times\times\bZ_2^3$. 
\end{remark}

\subsection{$\bZ_3^3$-grading}\label{ss:WeylZ33}

Recall that this grading occurs only if $\chr{\FF}\ne 3$. Let $\Gamma:\cA=\bigoplus_{g\in \ZZ_3^3}\cA_g$ be the grading induced by the commuting order $3$ automorphisms $\varphi_i$, $i=1,2,3$, defined by \eqref{eq:vphi_1_2} and \eqref{eq:vphi_3}, i.e.,
\begin{equation}\label{eq:Z33_grading}
\vphi_i(X)=\omega^{k_i}X\quad\mbox{for all}\quad X\in\cA_{(\bar k_1,\bar k_2,\bar k_3)},
\end{equation} 
where $\omega$ is a fixed primitive third root of unity. Let $\{g_1,g_2,g_3\}$ be a basis of $\ZZ_3^3$ and pick nonzero $X_i\in\cA_{g_i}$, $i=1,2,3$. Then $X_1,X_2,X_3$ generate the Albert algebra, and we can scale them so that $X_i^3=1$ and hence $N(X_i)=1$, $i=1,2,3$.    

The subalgebra generated by $X_3$ is isomorphic to $\FF\times\FF\times\FF$, so there exists an automorphism of $\cA$ sending it to $\bF E_1\oplus\bF E_2\oplus\bF E_3$. Permuting $E_1,E_2,E_3$ if necessary, we may assume that $\varphi_3(E_i)=E_{i+1}$. In other words, we may assume $X_3=\sum_{i=1}^3\omega^{-i}E_i$. The subalgebra fixed by $\varphi_3$ is $\bF 1\oplus\{\sum_{i=1}^3\tilde\iota(x)\;|\; x\in\cC\}$, so there are elements $x,y\in \cC$ such that $X_1=\frac{1}{2}\sum_{i=1}^3\tilde\iota(x)$ and $X_2=\frac{1}{2}\sum_{i=1}^3\tilde\iota(y)$.

For any $z\in\cC$, the norm of $\sum_{i=1}^3\tilde\iota_i(z)$ is given by:
\[
\begin{split}
N\bigl(\tilde\iota_1(z)+\tilde\iota_2(z)+\tilde\iota_3(z)\bigr)
    &=N\bigl(\iota_1(\tau(z))+\iota_2(\tau^2(z))+\iota_3(z)\bigr)\\
    &=8n(\tau(z),\overline{\tau^2(z)}\,\bar z)=8n(z,\overline{\tau(z)}\,\overline{\tau^2(z)})\\
    &=8n(z,z*z).
\end{split}
\]
Since $N(X_i)=1$ for $i=1,2,3$, we get $n(x,x*x)=1=n(y,y*y)$. And since $T(X_i^2)=0$, we have $n(x)=0=n(y)$. Also, since the homogeneous components $\cA_{(\pm\bar 1,\bar 0,\bar 0)}$ and $\cA_{(\bar 0,\pm\bar 1,\bar 0)}$ are orthogonal relative to the trace form $T$, we conclude that the subspaces $\bF x\oplus\bF x*x$ and $\bF y\oplus \bF y*y$ are orthogonal relative to the norm of $\cC$. Now \cite[Proposition 3.9 and Theorem 3.12]{ElduqueGrSym} show that either $x*y=0$ or $y*x=0$, but not both, and that $x,y$ generate the Okubo algebra $(\cC,*,n)$ with multiplication table independent of $x$ and $y$.

Fix $a,b\in \cC$ with $a*b=0$, $n(a)=0=n(b)$, $n(a,a*a)=1=n(b,b*b)$, and $n\bigl(\bF a+\bF a*a,\bF b+\bF b*b\bigr)=0$. Let $\Gamma^+$ and $\Gamma^-$ be two $\ZZ_3^3$-gradings on the Albert algebra that are determined by the following conditions:
\begin{equation}\label{eq:two_Z33_gradings}
\begin{array}{cc}
\Gamma^+: & \Gamma^-:\\
\begin{array}{rcl}
\degree\bigl(\sum_{i=1}^3\tilde\iota_i(a)\bigr)&=&(\bar 1,\bar 0,\bar 0),\\[2pt] 
\degree\bigl(\sum_{i=1}^3\tilde\iota_i(b)\bigr)&=&(\bar 0,\bar 1,\bar 0),\\[2pt]
\degree\bigl(\sum_{i=1}^3\omega^{-i}E_i\bigr)&=&(\bar 0,\bar 0,\bar 1),
\end{array} &
\begin{array}{rcl} 
\degree\bigl(\sum_{i=1}^3\tilde\iota_i(a)\bigr)&=&(\bar 0,\bar 1,\bar 0),\\[2pt]
\degree\bigl(\sum_{i=1}^3\tilde\iota_i(b)\bigr)&=&(\bar 1,\bar 0,\bar 0),\\[2pt]
\degree\bigl(\sum_{i=1}^3\omega^{-i}E_i\bigr)&=&(\bar 0,\bar 0,\bar 1).
\end{array}
\end{array}
\end{equation}
For example, we may take $a=e_1$ and $b=u_1$. Then $\Gamma^+=\Gamma$.

Note that for $\Gamma^+$ with $\{g_1,g_2,g_3\}$ being the standard basis of $\ZZ_3^3$, we have:
\[
\begin{split}
(X_1X_2)X_3&=\Bigl(\frac{1}{4}\sum_{i=1}^3\tilde\iota_i(b*a)\Bigr)X_3\\
    &=\frac{1}{8}\sum_{i=1}^3(\omega^{-(i+1)}+\omega^{-(i+2)})\tilde\iota_i(b*a)\\
    &=-\frac{1}{8}\sum_{i=1}^3\omega^{-i}\tilde\iota_i(b*a),
\end{split}
\]
while
\[
\begin{split}
X_1(X_2X_3)
    &=X_1\Bigl(\frac{1}{4}\sum_{i=1}^3(\omega^{-(i+1)}+\omega^{-(i+2)})\tilde\iota_i(y)\Bigr)\\
    &=X_1\Bigl(-\frac{1}{4}\sum_{i=1}^3\omega^{-i}\tilde\iota_i(y)\Bigr)\\
    &=-\frac{1}{8}\bigl(\omega^{-2}\tilde\iota_1(b*a)+\omega^{-3}\tilde\iota_2(b*a)
        +\omega^{-1}\tilde\iota_3(b*a)\bigr),
\end{split}
\]
so that $(X_1X_2)X_3=\omega X_1(X_2X_3)$. However, for $\Gamma^-$ with $\{g_1,g_2,g_3\}$ being the standard basis of $\ZZ_3^3$, analogous computations give $(X_1X_2)X_3=\omega^{-1}X_1(X_2X_3)$. Therefore $\Gamma^+$ and $\Gamma^-$ are not isomorphic.

\begin{theorem}\label{th:WeylZ33}
Let $\Gamma$ be the $\ZZ_3^3$-grading on the Albert algebra as in \eqref{eq:Z33_grading} over an algebraically closed field of characteristic different from $2$ and $3$. 
Then $\W(\Gamma)$ is the commutator subgroup of $\Aut(\ZZ_3^3)$, i.e., $\W(\Gamma)\cong SL_3(3)$.
\end{theorem}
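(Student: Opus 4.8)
The plan is to identify $\W(\Gamma)$ with the stabilizer of an isomorphism class under a natural action of $\Aut(U(\Gamma))=\Aut(\ZZ_3^3)=GL_3(3)$, and then to prove that this stabilizer has index $2$; since $GL_3(3)$ has a unique subgroup of index $2$, namely $SL_3(3)$, this forces $\W(\Gamma)=SL_3(3)$.

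For $\mu\in GL_3(3)$, relabelling the components produces a new grading $\Gamma^\mu$ on $\cA$ with $\Gamma^\mu_g=\cA_{\mu^{-1}(g)}$; it is equivalent to $\Gamma$ and has the same universal group $\ZZ_3^3$. By the description of $\W(\Gamma)$ as a subgroup of $\Aut(U(\Gamma))$, an element $\mu$ lies in $\W(\Gamma)$ exactly when there is an algebra automorphism $\psi$ with $\psi(\cA_h)=\cA_{\mu(h)}$ for all $h$, i.e. exactly when $\Gamma^\mu\cong\Gamma$ as $\ZZ_3^3$-graded algebras. Hence $\mu\mapsto[\Gamma^\mu]$ is an action of $GL_3(3)$ on the set of isomorphism classes of gradings in the equivalence class of $\Gamma$, and $\W(\Gamma)$ is precisely the stabilizer of $[\Gamma]$.

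The key step is to show that this set consists of exactly two classes, represented by $\Gamma^+$ and $\Gamma^-$. The normalization carried out in the paragraphs preceding the theorem applies to any grading in the equivalence class: rescaling the generators so that $X_i^3=1$, transporting the subalgebra generated by $X_3$ to $\bF E_1\oplus\bF E_2\oplus\bF E_3$ so that $X_3=\sum_{i=1}^3\omega^{-i}E_i$, and writing $X_1=\frac12\sum_{i=1}^3\tilde\iota_i(x)$, $X_2=\frac12\sum_{i=1}^3\tilde\iota_i(y)$, where $x,y$ generate the Okubo algebra. By \cite[Proposition 3.9 and Theorem 3.12]{ElduqueGrSym} exactly one of $x*y=0$, $y*x=0$ holds, and in each case the full multiplication table is determined; thus the graded-isomorphism type is governed by this single dichotomy, giving at most the two classes $\Gamma^+$ and $\Gamma^-$. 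The computation already performed, $(X_1X_2)X_3=\omega X_1(X_2X_3)$ for $\Gamma^+$ against $\omega^{-1}$ for $\Gamma^-$, shows that these classes are distinct, so there are exactly two.

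Finally I would apply orbit--stabilizer. The transposition $\sigma\in GL_3(3)$ exchanging the first two standard basis vectors of $\ZZ_3^3$ carries $\Gamma=\Gamma^+$ to $\Gamma^\sigma=\Gamma^-$, so the action on the two-element set is transitive and the orbit of $[\Gamma]$ has size $2$; thus $[GL_3(3):\W(\Gamma)]=2$. Because $\det\colon GL_3(3)\to\FF_3^\times\cong\ZZ_2$ is surjective with kernel the perfect group $SL_3(3)$, the abelianization of $GL_3(3)$ is $\ZZ_2$, so $SL_3(3)=[GL_3(3),GL_3(3)]$ is the unique subgroup of index $2$. Therefore $\W(\Gamma)=SL_3(3)$, the commutator subgroup of $\Aut(\ZZ_3^3)$. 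The main obstacle is the middle step: verifying that the chirality $(X_1X_2)X_3=\omega^{\pm1}X_1(X_2X_3)$, measured on the distinguished basis, is a complete graded-isomorphism invariant, i.e. that the normalization is always available for an arbitrary member of the class and that the Okubo structure theorem pins down the remaining products; everything afterwards is elementary group theory. Note also the role of $\chr{\FF}\ne3$, which guarantees $\omega\ne\omega^{-1}$ and hence that $\Gamma^+$ and $\Gamma^-$ can be separated at all.
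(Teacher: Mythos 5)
Your proposal is correct and follows essentially the same route as the paper: both reduce an arbitrary choice of degree-$g_j$ generators to one of the two normal forms $\Gamma^\pm$ via the normalization and the Okubo dichotomy established just before the theorem, use the chirality computation $(X_1X_2)X_3=\omega^{\pm1}X_1(X_2X_3)$ to see that $\Gamma^+\not\cong\Gamma^-$, and conclude that $\W(\Gamma)$ has index $2$ in $GL_3(3)$, hence equals its unique index-$2$ subgroup $SL_3(3)$. Your reformulation via orbit--stabilizer on the two-element set of isomorphism classes is just a cleaner packaging of the paper's ``one of the two, but not both'' argument.
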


\begin{proof}
We may assume $\Gamma=\Gamma^+$ as in \eqref{eq:two_Z33_gradings}. Identify $\Aut(\ZZ_3^3)$ with $GL_3(3)$. For any $\mu\in GL_3(3)$, let $g_j$, $j=1,2,3$, be the images of the elements of the standard basis (i.e., the columns of matrix $\mu$). Pick elements $X'_j$ such that $\deg X'_j=g_j$ and $(X'_j)^3=1$, $j=1,2,3$. We have shown that there exists an automorphism of $\cA$ either sending $X'_j$ to $X_j$ associated with $\Gamma^+$, $j=1,2,3$, or sending $X'_j$ to $X_j$ associated with $\Gamma^-$, $j=1,2,3$, but not both. This shows that $\Aut(\Gamma)$ has index $2$ in $GL_3(3)$. Since the commutator subgroup of $GL_3(3)$ is $SL_3(3)$, and it has index $2$, we conclude that $\W(\Gamma)=SL_3(3)$.
\end{proof}

\begin{remark}
Clearly, $\Stab(\Gamma)=\Diag(\Gamma)$ is isomorphic to $\ZZ_3^3$.
\end{remark}



\end{document}